\DeclareMathOperator{\trace}{trace}
\DeclareMathOperator{\rank}{rank}
\newtheorem{thm}{Theorem}[section]
\newtheorem{theorem}[thm]{Theorem}
\newtheorem{corollary}[thm]{Corollary}
\newtheorem{lemma}[thm]{Lemma}
\newtheorem{proposition}[thm]{Proposition}
\newtheorem{definition}[thm]{Definition}
\theoremstyle{remark}
\newtheorem{remark}[thm]{Remark}
\newcommand{\cH}{\mathcal H}
\def\<{\langle}
\def\>{\rangle}
\def\be{\begin{equation}}
\def\ee{\end{equation}}
\def\im{\mathrm{im}}
\begin{document}

\title{Tight and random nonorthogonal fusion frames}

\author{Jameson Cahill} 
\address{Mathematics Department, Duke University, Durham, NC 27708}
\email{jcahill@math.duke.edu}
\author{Peter G. Casazza}
\address{Department of Mathematics, University of Missouri, Columbia, MO 65211}
\email{casazzap@missouri.edu}
\author{Martin Ehler}
\address{Department of Mathematics, University of Vienna, Vienna, Austria}
\email{martin.ehler@helmholtz-muenchen.de}
\author{Shidong Li}
\address{Department of Mathematics, San Francisco State University, San Francisco, CA 94132}
\email{shidong@sfsu.edu}

\begin{abstract}
This paper continues the investigation of nonorthogonal fusion frames started in \cite{CCL12}.  First we show that tight nonorthogonal fusion frames a relatively easy to com by.  In order to do this we need to establish a classification of how to to wire a self adjoint operator as a product of (nonorthogonal) projection operators.  We also discuss the link between nonorthogonal fusion frames and positive operator valued measures, we define and study a nonorthogonal fusion frame potential, and we introduce the idea of random nonorthogonal fusion frames.
\end{abstract}

\keywords{fusion frame,projection}
\subjclass[2010]{15:42C15}

\date{}
\maketitle

\section{Introduction}
Fusion frames were introduced in \cite{MR2066823} and further developed in \cite{MR2419707}.  Recently there has been much activity around the idea of fusion frames, see \cite{FusionChapter} and references therein.  Loosely speaking, a fusion frame is a collection of subspaces $\{W_i\}_{i=1}^m$ all contained in some bigger Hilbert space $\mathcal{H}$ such that any signal $f\in\mathcal{H}$ can be stably reconstructed from the set of orthogonal projections $\{\pi_i f\}_{i=1}^m$, where $\pi_i$ denotes the orthogonal projection from $\mathcal{H}$ onto $W_i$.  Typically we think of the dimension of each subspaces $W_i$ as being much smaller than the dimension of $\mathcal{H}$ so that a high dimensional signal $f$ can be reconstructed from several low dimensional measurements $\{\pi_i f\}_{i=1}^m$.

In \cite{CCL12}, we introduced the idea of nonorthogonal fusion frames in order to achieve sparsity of the fusion frame operator.  The basic observation in \cite{CCL12} is that replacing orthogonal projections $\pi_i$ in the original definition of fusion frames \cite{MR2419707} by non-orthogonal projections ${P}_i$ onto the same subspaces $W_i$ can result in a fusion frame operator which is much sparser.   This is because, for example, one can always choose the null space of the projection ${P}_i$ to contain some basis elements $\{e_{i_j}\}_j$ that are complementary to the subspace $W_i$, and thereby nullify  some columns of ${P}_i$, which in turn results in sparsity of the (new) fusion frame operator.  One further observation which was made in \cite{CCL12} but was not explored very thoroughly there is that tight nonorthogonal fusion frames are much more abundant than tight (orthogonal) fusion frames.  In this continued effort, constructions of tight nonorthogonal fusion frames and nonorthogonal fusion frames of a prescribed fusion frame operator are provided.  

One of the main applications of fusion frames is to sensor networks.  In this setting we have a collection of sensors each of which collect local information and then transmit this information to some central processing station where all of the separate pieces of local information are fused together.  We think of each sensor as collecting data that is contained in some subspace of a common Hilbert space.  To be more specific each sensor is spanned by a sensory frame $\{\varphi_j\}$ given by the elementary transformation (often simple shifts) of the spatial reversal of the sensor's impulse response function \cite{LiYan08}.  The measurement of each sensor is thus given by $\{\< f, \varphi_j\>\}$.  Therefore, the implementation of nonorthogonal projections in sensor networks is feasible in practice by bundling each sensor with an auxiliary sensor which controls the direction of the projection.  For instance, if the sensor has a low-pass characteristic, the auxiliary sensor would need to have the high-pass or band-pass nature, having certain complementary information.  See section 6 of \cite{CCL12} for a more detailed discussion.

We now give a formal definition of nonorthogonal fusion frames.  Throughout this paper, let $\mathcal{H}_n$ denote an $n$-dimensional Hilbert space. 

\begin{definition}
An operator $P:\mathcal{H}_n\rightarrow\mathcal{H}_n$ is called a \textit{projection} if $P^2=P$.  If in addition we have $P^*=P$ then $P$ is called an \textit{orthogonal projection}.
\end{definition}

\begin{definition}
Let $\{P_i\}_{i=1}^m$ be a collection of projections on $\mathcal{H}$ and $\{v_i\}_{i=1}^m$ a collection of positive real numbers.  We say $\{(P_i,v_i)\}_{i=1}^m$ is a nonorthogonal fusion frame for $\mathcal{H}$ if there exist constants $0<A\leq B<\infty$ such that
$$
A\|f\|^2\leq\sum_{i=1}^m v_i^2\|P_if\|^2\leq B\|f\|^2
$$
for every $f\in\mathcal{H}$.  We say it is tight if $A=B$.
\end{definition}

\begin{definition}
Given a nonorthogonal fusion frame we define the nonorthogonal fusion frame operator $S:\mathcal{H}_n\rightarrow\mathcal{H}_n$ by
\begin{equation}\label{operator}
Sf=\sum_{i=1}^m v_i^2P_i^*P_if.
\end{equation}
\end{definition}
We observe that $\{(v_i,P_i)\}$ is tight if and only if $S=\lambda I$ (where $\lambda=A=B$).  Therefore, much of this paper is devoted to studying ways of writing multiples of the identity in the form of the right hand side of equation \eqref{operator}.  We will also usually assume that $v_i=1$ for every $i=1,...,m$

Before leaving the introduction we collect some basic facts about projections and fix some notation that will be used throughout this paper.

\begin{proposition}\label{basic}
Let $P$ be a projection on $\cH$, $W = \im P$, $W^* \equiv (\ker P)^{\perp}= [(I-P)(\cH)]^{\perp}$. Denote by $P^*$ the adjoint of $P$ in $\cH$.   Then:

(1)  $(P^*)^2 = P^*$, and $\im P^*=W^*$, $\ker P^*=W^{\perp}$.

(2)  $W^* = \im P^* = \im P^*P$.

(3)  $P$ is an invertible operator mapping $W^*$ onto $W$.

(4)  $\dim (W)=\dim (W^*)$.
\end{proposition}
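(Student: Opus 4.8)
The plan is to establish the four parts in sequence, relying on two standard facts for a projection on the finite-dimensional space $\cH$: the adjoint relations $\ker P^* = (\im P)^{\perp}$ and $\im P^* = (\ker P)^{\perp}$, together with the (generally nonorthogonal) direct-sum decomposition $\cH = \im P \oplus \ker P$, which holds because $P^2 = P$ and so $\mathcal{H}=\ker(I-P)\oplus\im(I-P)$. Nothing here requires $P$ to be self-adjoint, which is exactly the point to stay vigilant about.

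For part (1), I would first observe $(P^*)^2 = (P^2)^* = P^*$, so $P^*$ is again a projection. The image and kernel identities then fall out of the adjoint relations: $\ker P^* = (\im P)^{\perp} = W^{\perp}$, while $\im P^* = (\ker P)^{\perp} = W^*$, the last equality being precisely the definition of $W^*$. In finite dimensions $\im P^*$ is automatically closed, so it equals the full annihilator and there is no subtlety about closures.

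For part (2), the first equality $W^* = \im P^*$ is just part (1). For the second, I would invoke $\ker P^* = W^{\perp}$ from (1) together with the orthogonal decomposition $\cH = W \oplus W^{\perp}$. Since $P^*$ annihilates $W^{\perp}$, I get $\im P^* = P^*(\cH) = P^*(W) = P^*(\im P) = \im(P^*P)$, which is exactly $\im P^*P$. For part (3), injectivity of $P$ on $W^*$ follows because any $x \in W^* \cap \ker P$ lies in $(\ker P)^{\perp} \cap \ker P = \{0\}$; for surjectivity onto $W$, given $w \in W = \im P$ I pick any $y$ with $Py = w$ and split $y = y_0 + y_1$ along $\cH = \ker P \oplus (\ker P)^{\perp} = \ker P \oplus W^*$, so that $y_1 \in W^*$ and $Py_1 = Py = w$. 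Thus $P|_{W^*}\colon W^* \to W$ is a bijection, and part (4) is immediate, since a linear isomorphism preserves dimension.

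I do not expect a genuine obstacle, as each step is elementary; the only place demanding care is keeping the orthogonal complements straight while the projections themselves are oblique. The temptation to write $P = P^*$ or $\im P = \im P^*$ must be resisted, and the adjoint relations from part (1) should serve as the sole bridge between the geometry of $P$ (the pair $W, \ker P$) and that of $P^*$ (the pair $W^*, W^{\perp}$).
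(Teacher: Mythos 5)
Your proof is correct, and each step checks out: the adjoint identities $\ker P^*=(\im P)^{\perp}$ and $\im P^*=(\ker P)^{\perp}$, the factorization $\im P^*=P^*(\im P)=\im P^*P$, and the bijectivity of $P|_{W^*}$ are all handled properly. The paper states Proposition~\ref{basic} without proof (it is offered as a list of basic facts), so there is no argument to compare against; your derivation is the standard one that the authors evidently had in mind.
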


\begin{corollary}
Given subspaces $W,W^*$ of $\cH$ with dim $W=$ dim $W^*$, there
is a projection $P$ onto $W$ with $P^*P(\cH)=W^*$ if and only if
$(W^*)^{\perp}\cap W = \{0\}$.
\end{corollary}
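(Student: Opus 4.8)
The plan is to reduce everything to the structure already supplied by Proposition \ref{basic}. The crucial first step is to translate the condition $P^*P(\cH)=W^*$ into a statement about $\ker P$. By Proposition \ref{basic}(2), any projection $P$ satisfies $\im P^*P=(\ker P)^{\perp}$, so requiring $P^*P(\cH)$ to equal the prescribed subspace $W^*$ is the same as requiring $(\ker P)^{\perp}=W^*$, i.e. $\ker P=(W^*)^{\perp}$ (using that $\cH$ is finite dimensional, so that taking orthogonal complements twice returns the original subspace). Thus the entire problem becomes: does there exist a projection with image $W$ and kernel $(W^*)^{\perp}$?

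For the forward direction, suppose such a $P$ exists, so that $\im P=W$ and $\ker P=(W^*)^{\perp}$. I would invoke the elementary fact that every idempotent operator satisfies $\im P\cap\ker P=\{0\}$: if $x=Py$ happens to lie in $\ker P$, then $x=Py=P^2y=Px=0$. Substituting the two subspaces identified above gives $W\cap(W^*)^{\perp}=\{0\}$ at once.

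For the converse, assume $(W^*)^{\perp}\cap W=\{0\}$. I would first upgrade the trivial intersection to a direct-sum decomposition of $\cH$ by a dimension count: since $\dim W=\dim W^*$ by hypothesis, we have $\dim(W^*)^{\perp}=n-\dim W^*=n-\dim W$, whence $\dim W+\dim(W^*)^{\perp}=n=\dim\cH$. Combined with the trivial intersection this yields $\cH=W\oplus(W^*)^{\perp}$. I would then define $P$ to be the (generally oblique) projection onto $W$ along $(W^*)^{\perp}$: each $x\in\cH$ has a unique decomposition $x=w+u$ with $w\in W$ and $u\in(W^*)^{\perp}$, and we set $Px=w$. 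This $P$ is manifestly idempotent with $\im P=W$ and $\ker P=(W^*)^{\perp}$, so Proposition \ref{basic}(2) gives $P^*P(\cH)=(\ker P)^{\perp}=W^*$, as required.

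The main obstacle is not computational but notational: one must keep straight that the symbol $W^*$ in the corollary denotes an \emph{arbitrary prescribed} subspace, whereas in Proposition \ref{basic} it was defined \emph{intrinsically} as $(\ker P)^{\perp}$. The real content of the corollary is exactly the compatibility of these two meanings, and the transversality condition $(W^*)^{\perp}\cap W=\{0\}$ is precisely what allows the prescribed $W^*$ to be realized intrinsically by some projection onto $W$. Once this identification is in place, both implications collapse to the two standard facts that an idempotent splits $\cH$ as image plus kernel and that transversal subspaces of complementary dimension span the whole space.
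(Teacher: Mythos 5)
Your proof is correct; the paper states this corollary without proof, treating it as an immediate consequence of Proposition \ref{basic}, and your argument is exactly the intended deduction: translating $P^*P(\cH)=W^*$ into $\ker P=(W^*)^{\perp}$ via Proposition \ref{basic}(2), using $\im P\cap\ker P=\{0\}$ for the forward direction, and the dimension count $\dim W+\dim(W^*)^{\perp}=n$ to build the oblique projection for the converse. Nothing further is needed.
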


Throughout this paper we will always use the notation of Proposition \ref{basic}; \textit{i.e.}, $P$ will always stand for a (nonorthogonal) projection, $W$ will always be the image of $P$, and $W^*$ will always be the image of $P^*$.  Furthermore, we will always use the symbol $\pi_W$ to denote the \textit{orthogonal} projection onto the subspace $W\subseteq\mathcal{H}_n$.

This paper is organized as follows:  In Section \ref{sec_classification}, for a fixed self-adjoint operator $T$ we will classify the projections $P$ for which $T=P^*P$.  In Section \ref{sec_tightFF}, we apply the results of Section \ref{sec_classification} to derive new results on the existence of tight nonorthogonal fusion frames.  In particular, in subsection \ref{2proj} we give a complete classification of tight nonorthogonal fusion frames with 2 projections.  In Section \ref{section:max}, a brief discussion is given to how much positive operator valued measure (POVM)  $\{T_i\}_{i=1}^m$, cf.~\cite{J.M.Renes:2004aa}, deviates from an orthogonal decomposition of the identity operator $I$.  To this end, maximum correlation between projection-induced $T_i$'s are established.  The last section of the article is devoted to the extension of nonorthogonal fusion frames to the context of probabilistic fusion frames.  Random projections and the number of which add a touching flavor to the understanding of the notion of nonorthogonal fusion frames and perhaps the applications thereby induced.

\section{Classification of self adjoint operators via projections}\label{sec_classification}

Let $T:\mathcal{H}_n\rightarrow\mathcal{H}_n$ be a positive, self adjoint, linear operator.  The main point of this section is to classify the set
$$
\Omega(T)=\{P:P^2=P,P^*P=T\}.
$$
The spectral theorem tells us that $T=\sum_{j=1}^n\lambda_j\pi_j$ where the $\lambda_j$'s are the eigenvalues of $T$ and $\pi_j$ is the orthogonal projection onto the one dimensional span of the $j$th eigenvector of $T$.  Therefore $P\in\Omega(T)$ if and only if $P^*P$ has the same eigenvalues and eigenvectors as $T$.  Also note that if $P\in\Omega(T)$ then $\ker(P)=\text{im}(T)^{\perp}$, and since a projection is uniquely determined by its kernel and its image we have a natural bijection between $\Omega(T)$ and the set
$$
\tilde{\Omega}(T):=\{W\subseteq\mathcal{H}_n:\text{im}(P)=W\text{ for some }P\in\Omega(T)\}.
$$
given by
$$
\Omega(T)\ni P\mapsto im(P)\in\tilde{\Omega}(T).
$$
We start with two elementary lemmas.

\begin{lemma}\label{Pe}
Let $P$ be a projection and let $\{e_j\}_{j=1}^k$ be an orthonormal basis of $W^*$ consisting of eigenvectors of $P^*P$ with corresponding nonzero eigenvalues $\{\lambda_j\}$.  Then $\{Pe_j\}_{j=1}^k$ is an orthogonal basis for $W$ and $\|Pe_j\|=\sqrt{\lambda_j}$.
\end{lemma}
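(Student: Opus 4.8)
The plan is to reduce the entire statement to a single adjoint identity. First I would invoke Proposition \ref{basic} to set up the arithmetic of dimensions: by part (2) we have $W^* = \im P^*P$, so $P^*P$ is a positive self-adjoint operator whose image is exactly $W^*$ and whose kernel is therefore $(W^*)^{\perp}$. Consequently $P^*P$ acts invertibly on $W^*$, which is what guarantees that the eigenvectors $\{e_j\}$ with nonzero eigenvalues genuinely furnish an orthonormal basis of all of $W^*$, and that $k = \dim W^* = \dim W$ by part (4). Recording this at the outset is what makes the final counting argument legitimate.

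The orthogonality and the norm formula both fall out of one computation. Since each $e_j$ is an eigenvector of $P^*P$ with eigenvalue $\lambda_j$, and $\{e_j\}$ is orthonormal, I would write
\[
\langle Pe_i, Pe_j\rangle = \langle P^*Pe_i, e_j\rangle = \lambda_i \langle e_i, e_j\rangle = \lambda_i\,\delta_{ij}.
\]
Taking $i\neq j$ gives orthogonality of the family $\{Pe_j\}$, and taking $i=j$ gives $\|Pe_j\|^2 = \lambda_j$, whence $\|Pe_j\| = \sqrt{\lambda_j}$; here $\lambda_j > 0$ because $P^*P$ is positive and the eigenvalue is nonzero by hypothesis, so the square root is unambiguous.

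It then remains to upgrade the orthogonal set $\{Pe_j\}_{j=1}^k$ to a basis of $W$. Each $Pe_j$ lies in $\im P = W$, and since $\lambda_j \neq 0$ forces $\|Pe_j\| = \sqrt{\lambda_j} \neq 0$, none of these vectors vanishes; an orthogonal family of nonzero vectors is automatically linearly independent. Thus we have $k$ linearly independent vectors sitting inside the $k$-dimensional space $W$, which forces them to span $W$. The only point requiring genuine care is this dimension count, specifically the claim that the prescribed eigenvectors exhaust a basis of $W^*$ so that there are exactly $k = \dim W$ of them; this is precisely what Proposition \ref{basic} supplies, so once that is in place the proof is essentially the one displayed computation.
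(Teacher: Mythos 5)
Your proof is correct and follows essentially the same route as the paper, whose entire argument is the single computation $\langle Pe_j,Pe_\ell\rangle=\langle P^*Pe_j,e_\ell\rangle=\lambda_j\langle e_j,e_\ell\rangle$. You simply make explicit the dimension count ($k=\dim W^*=\dim W$ via Proposition \ref{basic}) that the paper leaves implicit when upgrading the orthogonal set to a basis of $W$.
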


\begin{proof}
Just observe that $\langle Pe_j,Pe_{\ell}\rangle=\langle P^*Pe_j,e_{\ell}\rangle=\lambda_j\langle e_j,e_{\ell}\rangle$.
\end{proof}

\begin{lemma}\label{Lemma_eigenP*P}
Let $P$ be a projection and suppose $\lambda$ is an eigenvalue of $P^*P$, $\lambda\neq 0$.  Then $\lambda\geq 1$.  Moreover, $\lambda=1$ if and only if the corresponding eigenvector is in $W \cap W^*$.
\end{lemma}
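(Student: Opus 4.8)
The plan is to produce a single orthogonal decomposition of $Pe$ from which both assertions fall out. Fix a nonzero vector $e$ with $P^*Pe=\lambda e$ and $\lambda\neq 0$. First I would record two preliminary facts. Since $\lambda\|e\|^2=\langle P^*Pe,e\rangle=\|Pe\|^2\geq 0$ and $e\neq 0$, we have $\lambda>0$. Moreover $e=\lambda^{-1}P^*(Pe)\in\im P^*=W^*$ by Proposition \ref{basic}, so the eigenvector automatically lies in $W^*=(\ker P)^{\perp}$.

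The heart of the argument is the splitting $Pe=e+(Pe-e)$. Because $P$ is a projection, $P(Pe-e)=P^2e-Pe=0$, so $Pe-e\in\ker P$; on the other hand $e\in W^*=(\ker P)^{\perp}$. Thus the two summands are orthogonal and the Pythagorean identity gives $\|Pe\|^2=\|e\|^2+\|Pe-e\|^2$.

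Combining this with $\|Pe\|^2=\lambda\|e\|^2$ yields $\lambda\|e\|^2=\|e\|^2+\|Pe-e\|^2\geq\|e\|^2$, hence $\lambda\geq 1$, with equality precisely when $Pe=e$. For the \emph{moreover} clause I would argue that $Pe=e$ is equivalent to $e\in W\cap W^*$: if $Pe=e$ then $e\in\im P=W$, and since we already know $e\in W^*$ this puts $e\in W\cap W^*$; conversely, if $e\in W$ then $P$ fixes $e$ (a projection restricts to the identity on its image, as $P(Pv)=Pv$), so $\lambda\|e\|^2=\|Pe\|^2=\|e\|^2$ forces $\lambda=1$.

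The only nontrivial step is spotting the right decomposition; once one notices that $Pe-e$ is annihilated by $P$ while $e$ sits in the orthogonal complement of $\ker P$, everything else is routine, so I do not anticipate any real obstacle here.
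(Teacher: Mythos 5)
Your proof is correct and rests on exactly the same orthogonal decomposition $Pe = e + (Pe-e)$, with $Pe-e\in\ker P$ and $e\in W^*=(\ker P)^{\perp}$, that the paper's proof uses. The only difference is a minor streamlining: you identify $\|Pe\|^2$ with $\lambda\|e\|^2$ directly via $\langle P^*Pe,e\rangle$, whereas the paper reaches $\lambda\geq 1$ by applying the inequality $\|Py\|\geq\|y\|$ a second time to $P^*$ (obtaining $\|P^*Px\|\geq\|Px\|\geq\|x\|$); your shortcut avoids that second step but the argument is essentially identical.
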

\begin{proof}
Note that $W^*=\text{im}\ P^*P$, so all eigenvectors of
$P^*P$ corresponding to nonzero eigenvalues are in $W^*$.  Let $x\in W^*$ and write $Px=x+(P-I)x$.
Since $x \perp (I-P)x$,
\begin{equation}\label{norm}
\|Px\|^2=\|x\|^2+\|(P-I)x\|^2\geq\|x\|^2.
\end{equation}
By the same argument on $P^*$ we get $\|P^*Px\|\geq\|Px\|\geq\|x\|$ for all $x\in W^*$.  Therefore, if $P^*Px=\lambda x$ we have that $\lambda\geq 1$.

Finally, by equation (\ref{norm}), $\lambda=1$ if and only if $(I-P)x=0$, or $x=Px \in W$.  Hence $x\in W\cap W^*$.
\end{proof}

The next proposition allows us reduce our problem to the case when $\text{rank}(T)\leq n/2$

\begin{proposition}\label{bigrank}
Let $P$ be a projection, then we can write
$$
P=P'+\pi_{W\cap W^*}
$$
where $\pi_{W\cap W^*}$ is the orthogonal projection onto $W\cap W^*$, and $P'$ is a projection such that all nonzero eigenvalues of $P'^*P'$ are strictly greater than 1.
\end{proposition}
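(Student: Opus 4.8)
The plan is to set $P'=P-\pi_{W\cap W^*}$ and verify directly that this $P'$ is a projection with the claimed spectral property. The conceptual anchor is Lemma \ref{Lemma_eigenP*P}, which identifies $W\cap W^*$ as exactly the eigenspace of $P^*P$ for the eigenvalue $1$; the rest of the argument flows from understanding how $\pi_{W\cap W^*}$ interacts algebraically with $P$.

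The first thing I would establish is the pair of identities $P\pi_{W\cap W^*}=\pi_{W\cap W^*}$ and $\pi_{W\cap W^*}P=\pi_{W\cap W^*}$ (together with their adjoint $P^*\pi_{W\cap W^*}=\pi_{W\cap W^*}$). The first holds because $P$ restricts to the identity on its image $W$, and $W\cap W^*\subseteq W$. The second holds because for any $x$ we have $Px-x=-(I-P)x\in (I-P)(\cH)=\ker P=(W^*)^{\perp}$, which is orthogonal to $W\cap W^*\subseteq W^*$; hence $\pi_{W\cap W^*}(Px-x)=0$. These are the load-bearing computations, and they rely crucially on the two containments $W\cap W^*\subseteq W$ and $W\cap W^*\subseteq W^*$.

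Given these identities, checking that $P'$ is a projection is a one-line expansion: $(P-\pi_{W\cap W^*})^2=P-P\pi_{W\cap W^*}-\pi_{W\cap W^*}P+\pi_{W\cap W^*}=P-\pi_{W\cap W^*}$. The same identities collapse the cross terms in $(P')^*P'=(P^*-\pi_{W\cap W^*})(P-\pi_{W\cap W^*})$ and yield the clean formula $(P')^*P'=P^*P-\pi_{W\cap W^*}$, which is what makes the spectral analysis transparent.

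Finally I would diagonalize. Since $W\cap W^*$ is an eigenspace of the self-adjoint operator $P^*P$, the projection $\pi_{W\cap W^*}$ commutes with $P^*P$, so the two are simultaneously diagonalizable and the spectrum of $(P')^*P'=P^*P-\pi_{W\cap W^*}$ can be read off eigenspace by eigenspace. On $W\cap W^*$ both operators act as the identity, so $(P')^*P'$ vanishes there; on the orthogonal complement $\pi_{W\cap W^*}$ is zero and $(P')^*P'$ agrees with $P^*P$, whose eigenvalues there are either $0$ (on $\ker P$) or, by Lemma \ref{Lemma_eigenP*P}, strictly greater than $1$. The step I expect to require the most care is this very last point: that no eigenvalue equal to $1$ survives on the complement. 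This is exactly guaranteed by the "moreover" clause of Lemma \ref{Lemma_eigenP*P}, which says the \emph{entire} eigenvalue-$1$ eigenspace of $P^*P$ equals $W\cap W^*$ and has therefore been removed. Hence every nonzero eigenvalue of $(P')^*P'$ exceeds $1$, completing the argument.
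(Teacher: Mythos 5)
Your proof is correct, and it reaches the same decomposition as the paper by a different route. The paper builds $P'$ from scratch as the oblique projection onto the orthogonal complement $W'$ of $W\cap W^*$ in $W$ along $\ker P + (W\cap W^*)$, checks $P'\pi_{W\cap W^*}=\pi_{W\cap W^*}P'=0$, and then identifies $P'+\pi_{W\cap W^*}$ with $P$ by comparing images and kernels; the final spectral claim is essentially asserted. You instead set $P'=P-\pi_{W\cap W^*}$ and verify everything algebraically from the identities $P\pi_{W\cap W^*}=\pi_{W\cap W^*}P=\pi_{W\cap W^*}$ (and their adjoints), which are correctly justified by $W\cap W^*\subseteq W=\im P$ and $Px-x\in\ker P=(W^*)^{\perp}\subseteq (W\cap W^*)^{\perp}$. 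The payoff of your route is the explicit formula $(P')^*P'=P^*P-\pi_{W\cap W^*}$, which, combined with the fact that $W\cap W^*$ is exactly the eigenvalue-$1$ eigenspace of $P^*P$ (Lemma \ref{Lemma_eigenP*P}), makes the statement about the surviving eigenvalues being strictly greater than $1$ completely transparent --- arguably more rigorous than the paper's closing sentence. What the paper's construction buys instead is an explicit description of $\im P'$ and $\ker P'$, which it reuses later (in the proposition characterizing when $\Omega(T)\neq\emptyset$ for $\rank T>n/2$); your $P'$ is of course the same operator, so nothing is lost, but that structural information would have to be extracted separately if needed.
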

\begin{proof}
First note that Lemma \ref{Lemma_eigenP*P} says that $W\cap W^*=\{x:P^*Px=x\}$.  Now let $W'$ be the orthogonal complement of $W\cap W^*$ in $W$ and let $P'$ be the projection onto $W'$ along $\ker(P)+W\cap W^*$.  Then $P'\pi_{W\cap W^*}=\pi_{W\cap W^*}P'=0$, so $(P'+\pi_{W\cap W^*})^2=P'^2+\pi_{W\cap W^*}^2=P'+\pi_{W\cap W^*}$.  It is clear that $\text{im}(P'+\pi_{W\cap W^*})=W$. Since $\ker{P}=W^{*\perp}\subseteq(W\cap W^*)^{\perp}$ it follows that $\ker(P)\subseteq\ker(P'+\pi_{W\cap W^*})$ so we must have $\ker(P)=\ker(P'+\pi_{W\cap W^*})$.  Therefore $P=P'+\pi_{W\cap W^*}$, and the nonzero eigenvalues of $P'^*P'$ are precisely the nonzero eigenvalues of $P^*P$ which are greater than 1.
\end{proof}

\begin{corollary}
If $\text{rank}(T)=k>\frac{n}{2}$ and $T$ does not have 1 as an eigenvalue with multiplicity at least $k-\lfloor\frac{n}{2}\rfloor$, then $\Omega(T)=\emptyset$.
\end{corollary}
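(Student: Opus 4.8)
The plan is to argue by contraposition: instead of assuming the hypothesis, I will show directly that \emph{if} $\Omega(T)\neq\emptyset$ then $1$ must be an eigenvalue of $T$ of multiplicity at least $k-\lfloor n/2\rfloor$. So fix any $P\in\Omega(T)$ and invoke Proposition \ref{bigrank} to write $P=P'+\pi_{W\cap W^*}$. Since $P^*P=T$, the identification used in the proof of that proposition (coming from Lemma \ref{Lemma_eigenP*P}) shows that $W\cap W^*=\{x:Tx=x\}$ is exactly the eigenspace of $T$ for the eigenvalue $1$; write $d=\dim(W\cap W^*)$ for its multiplicity.

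Next I would pin down two dimension counts. Because $\text{im}(P^*P)=W^*$ and $\dim W=\dim W^*$ by Proposition \ref{basic}, the rank of $P$ equals the rank of $P^*P=T$, namely $k$. In the construction of Proposition \ref{bigrank} the image $W'$ of $P'$ is the orthogonal complement of $W\cap W^*$ \emph{inside} $W$, so $\dim W'=k-d$, and applying Proposition \ref{basic}(4) to $P'$ gives $\dim(W')^*=\dim W'=k-d$ as well.

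The decisive step is to turn the spectral hypothesis on $P'$ into a geometric one. By construction every nonzero eigenvalue of $P'^*P'$ is strictly greater than $1$, so $1$ is not an eigenvalue of $P'^*P'$ at all; since Lemma \ref{Lemma_eigenP*P} identifies $W'\cap(W')^*$ with the eigenspace $\{x:P'^*P'x=x\}$, this forces $W'\cap(W')^*=\{0\}$. With trivial intersection the dimension formula gives $\dim\bigl(W'+(W')^*\bigr)=2(k-d)$, and since this subspace lives in $\mathcal{H}_n$ we get $2(k-d)\leq n$. As $k-d$ is an integer this sharpens to $k-d\leq\lfloor n/2\rfloor$, i.e.\ $d\geq k-\lfloor n/2\rfloor$, which is the claimed multiplicity bound; the contrapositive is exactly the corollary.

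I expect no genuinely hard step once Proposition \ref{bigrank} and Lemma \ref{Lemma_eigenP*P} are in hand; the only thing to watch is the bookkeeping---remembering that $W'$ is the complement of $W\cap W^*$ in $W$ rather than in all of $\mathcal{H}_n$, and that $\dim W'=\dim(W')^*$ by the adjoint symmetry of Proposition \ref{basic}. As a sanity check, the extreme case $d=0$ collapses the inequality to $2k\leq n$, directly contradicting $k>n/2$, which also confirms the hypothesis is non-vacuous, since $k>n/2$ always forces $k-\lfloor n/2\rfloor\geq 1$.
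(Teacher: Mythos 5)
Your argument is correct and is essentially the proof the paper intends: the corollary is stated without proof, but the same decomposition $P=P'+\pi_{W\cap W^*}$ from Proposition \ref{bigrank}, the identification of $W\cap W^*$ with the $1$-eigenspace via Lemma \ref{Lemma_eigenP*P}, and the dimension count $\dim\bigl(W'+(W')^*\bigr)=2(k-d)\leq n$ are exactly the ingredients the paper deploys in the $(1)\Rightarrow(2)$ direction of its later proposition characterizing when $\Omega(T)\neq\emptyset$ for $\mathrm{rank}(T)>n/2$. Your bookkeeping (in particular $\dim W'=\dim (W')^*=k-d$ and the integrality step $k-d\leq\lfloor n/2\rfloor$) is sound.
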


We can now state the main theorem of this section:

\begin{theorem}\label{theorem}
Let $T:\mathcal{H}_n\rightarrow\mathcal{H}_n$ be a positive, self-adjoint operator of rank $k\leq\frac{n}{2}$.  Let $\{\lambda_j\}_{j=1}^k$ be the nonzero eigenvalues of $T$ and suppose $\lambda_j\geq 1$ for $i=1,...,k$ and let $\{e_j\}_{j=1}^k$ be an orthonormal basis of $\text{im}(T)$ consisting of eigenvectors of $T$.  Then
$$
\tilde{\Omega}(T)=\{\text{span}\{\frac{1}{\sqrt{\lambda_j}}e_j+\sqrt{\frac{\lambda_j-1}{\lambda_j}}e_{j+k}\}:\{e_j\}_{j=1}^{2k}\text{ is orthonormal}\}.
$$
\end{theorem}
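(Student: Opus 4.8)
The plan is to exploit the natural bijection $\Omega(T)\leftrightarrow\tilde\Omega(T)$ already established and to analyze, for an arbitrary $P\in\Omega(T)$, the image $W=\im P$ through the action of $P$ on an eigenbasis of $T$. Since $P^*P=T$, Proposition \ref{basic}(2) forces $W^*=\im P^*P=\im T$, so the given orthonormal eigenbasis $\{e_j\}_{j=1}^k$ of $\im T$ is precisely an orthonormal basis of $W^*$ consisting of eigenvectors of $P^*P$ with eigenvalues $\lambda_j$. Lemma \ref{Pe} then tells us that $\{Pe_j\}_{j=1}^k$ is an orthogonal basis of $W$ with $\|Pe_j\|=\sqrt{\lambda_j}$, so the whole problem reduces to describing the vectors $Pe_j$.

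For the inclusion ``$\subseteq$'' I would write $Pe_j=e_j+f_j$ with $f_j=(P-I)e_j$. Because $I-P$ maps into $\ker P=(W^*)^\perp$, the vector $f_j$ lies in $(\im T)^\perp$ and is orthogonal to every $e_\ell$; from \eqref{norm} one reads off $\|f_j\|^2=\lambda_j-1$, and expanding $\langle Pe_j,Pe_\ell\rangle=\langle e_j,e_\ell\rangle+\langle f_j,f_\ell\rangle$ together with the orthogonality from Lemma \ref{Pe} gives $\langle f_j,f_\ell\rangle=0$ for $j\neq\ell$. Hence the nonzero $f_j$ are mutually orthogonal vectors of length $\sqrt{\lambda_j-1}$ in $(\im T)^\perp$. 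Setting $e_{j+k}:=f_j/\|f_j\|$ when $\lambda_j>1$ and choosing any orthonormal completion in $(\im T)^\perp$ for the indices with $\lambda_j=1$ (there is room since $\dim(\im T)^\perp=n-k\geq k$), the set $\{e_j\}_{j=1}^{2k}$ is orthonormal and $\frac{1}{\sqrt{\lambda_j}}Pe_j=\frac{1}{\sqrt{\lambda_j}}e_j+\sqrt{\frac{\lambda_j-1}{\lambda_j}}e_{j+k}$, so $W=\im P$ has the asserted form.

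For the reverse inclusion I would, given any orthonormal $\{e_j\}_{j=1}^{2k}$ extending the fixed eigenbasis, set $w_j:=\frac{1}{\sqrt{\lambda_j}}e_j+\sqrt{\frac{\lambda_j-1}{\lambda_j}}e_{j+k}$, let $W=\text{span}\{w_j\}_{j=1}^k$, and define $P$ to be the (in general nonorthogonal) projection onto $W$ along $(\im T)^\perp$. The first thing to check is that this is well defined, i.e.\ $W\cap(\im T)^\perp=\{0\}$: the $\im T$-component of $\sum_j c_j w_j$ is $\sum_j (c_j/\sqrt{\lambda_j})e_j$, which vanishes only when all $c_j=0$, and combined with $\dim W+\dim(\im T)^\perp=k+(n-k)=n$ this gives $\mathcal{H}_n=W\oplus(\im T)^\perp$. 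Then $\ker P=(\im T)^\perp$ and $e_{j+k}\in\ker P$, so from $e_j=\sqrt{\lambda_j}\,w_j-\sqrt{\lambda_j-1}\,e_{j+k}$ we obtain $Pe_j=\sqrt{\lambda_j}\,w_j=e_j+\sqrt{\lambda_j-1}\,e_{j+k}$.

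Finally I would verify $P^*P=T$. For $x\in(\im T)^\perp=\ker P=\ker T$ both sides vanish. For the basis $\{e_j\}$ of $\im T$, a direct expansion gives $\langle Pe_j,Pe_\ell\rangle=\lambda_j\delta_{j\ell}$, while $\langle P^*Pe_j,y\rangle=\langle Pe_j,Py\rangle=0$ for $y\in\ker P$ shows $P^*Pe_j\in\im T$; hence $P^*Pe_j=\lambda_j e_j=Te_j$, and $P^*P=T$. The main obstacle, and the step deserving the most care, is this reverse direction: one must confirm that the nonorthogonal projection $P$ is genuinely well defined (the transversality $W\cap(\im T)^\perp=\{0\}$) and that its Gram computation reproduces exactly $T$ rather than merely a matrix with the correct diagonal. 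The hypothesis $k\leq n/2$ is used precisely to guarantee enough room in $(\im T)^\perp$ for the vectors $e_{k+1},\dots,e_{2k}$, and the case $\lambda_j=1$ must be tracked separately, since there $f_j=0$ and the corresponding $e_{j+k}$ enters only with coefficient zero.
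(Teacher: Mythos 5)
Your proof is correct and follows essentially the same route as the paper: the forward inclusion via Lemma \ref{Pe} and the decomposition $Pe_j=e_j+(P-I)e_j$ with $(P-I)e_j\in(\im T)^\perp$, and the reverse inclusion by taking $P$ to be the projection onto $W$ along $(\im T)^\perp$ and checking $P^*Pe_j=\lambda_j e_j$. In fact you supply several details the paper glosses over --- the pairwise orthogonality of the vectors $f_j$, the degenerate case $\lambda_j=1$ where $f_j=0$ and an arbitrary orthonormal completion in $(\im T)^\perp$ is needed, and the transversality $W\cap(\im T)^\perp=\{0\}$ that makes $P$ well defined --- so your write-up is, if anything, more complete than the original.
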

\begin{proof}
First suppose $W\in\tilde{\Omega}(T)$ and let $P$ be the projection onto $W$ along $\text{im}(T)^{\perp}$.  Be Lemma \ref{Pe} we know that $\{\frac{Pe_j}{\|Pe_j\|}\}_{j=1}^k$ is an orthonormal basis for $W$.  We also know that $\|Pe_j\|=\sqrt{\lambda_j}$ so
$$
\lambda_j=\|e_j\|^2+\|(P-I)e_j\|^2=1+\|(P-I)e_j\|^2
$$
which means
$$
\|(P-I)e_j\|=\sqrt{\lambda_j-1}
$$
so if we set
$$
e_{j+k}=\frac{(P-I)e_j}{\sqrt{1-\lambda_j}},
$$
then $\{e_j\}_{j=1}^{2k}$ is an orthonormal set and
$$
\frac{Pe_j}{\|Pe_j\|}=\frac{1}{\sqrt{\lambda_j}}e_j+\sqrt{\frac{\lambda_j-1}{\lambda_j}}e_{j+k}.
$$

Conversely suppose $W=\text{span}\{\frac{1}{\sqrt{\lambda_j}}e_j+\sqrt{\frac{\lambda_j-1}{\lambda_j}}e_{j+k}\}$ with $\{e_j\}_{j=1}^{2k}$ orthonormal.  Let $P$ be the projection onto $W$ along $\text{im}(T)^{\perp}$.  Notice that $e_j=e_j+\sqrt{\lambda_j-1}e_{j+k}-\sqrt{\lambda_j-1}e_{j+k}$ with $e_j+\sqrt{\lambda_j-1}e_{j+k}\in W$ and $-\sqrt{\lambda_j-1}e_{j+k}\in\text{im}(T)^{\perp}$, so $Pe_j=e_j+\sqrt{\lambda_j-1}e_{j+k}$ for $j=1,...,k$.  Similarly $e_j+\sqrt{\lambda_j-1}e_{j+k}=\lambda_je_j+(1-\lambda_j)e_j+\sqrt{\lambda_j-1}e_{j+k}$ with $\lambda_je_j\in W^*=\text{im}P^*$ and $(1-\lambda_j)e_j+\sqrt{\lambda_j-1}e_{j+k}\in W^{\perp}=\ker(P^*)$, so $P^*Pe_j=\lambda_je_j$ for $j=1,...,k$.  Therefore, $P^*P$ has the same eigenvectors and corresponding eigenvalues as $T$, so $P^*P=T$, and $W\in\tilde{\Omega}(T)$.
\end{proof}

Before proceeding we remark that Theorem \ref{theorem} is independent of our choice of eigenbasis for $T$.  To see this let $\{e_j'\}_{j=1}^k$ be any other eigenbasis for $T$ and let $W=\text{span}\{\frac{1}{\sqrt{\lambda_j}}e'_j+\sqrt{\frac{\lambda_j-1}{\lambda_j}}e_{j+k}\}$ with $\{e'_j\}_{j=1}^{2k}$ orthonormal.  By the second part of the proof of Theorem \ref{theorem} we have that $W\in\tilde{\Omega}(T)$, and so by the first part of the proof we have that in fact $W=\text{span}\{\frac{1}{\sqrt{\lambda_j}}e_j+\sqrt{\frac{\lambda_j-1}{\lambda_j}}e_{j+k}\}$ with $\{e_j\}_{j=1}^{2k}$ orthonormal.

We now state several consequences of Theorem \ref{theorem}.

\begin{corollary}\label{C1}
If $T$ is a positive self-adjoint operator of rank $\le \frac{n}{2}$ with all nonzero
eigenvalues $\ge 1$, then there is a projection $P$ so that $T=P^*P$.
\end{corollary}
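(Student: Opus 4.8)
The plan is to read this off directly from Theorem \ref{theorem}, since the two hypotheses here---that $T$ has rank $k\le n/2$ and that all nonzero eigenvalues of $T$ are $\ge 1$---are precisely the standing assumptions of that theorem. Theorem \ref{theorem} gives an \emph{explicit} description of the set $\tilde{\Omega}(T)$, and recall from the discussion preceding the lemmas that $\Omega(T)$ is in natural bijection with $\tilde{\Omega}(T)$ via $P\mapsto\text{im}(P)$. Hence, to conclude that there exists a projection $P$ with $P^*P=T$, it suffices to show that $\tilde{\Omega}(T)\neq\emptyset$, i.e.\ to produce one admissible subspace $W$. Equivalently, by the form of the answer in Theorem \ref{theorem}, I only need to exhibit a single orthonormal family $\{e_j\}_{j=1}^{2k}$ of the required type.

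First I would fix an orthonormal eigenbasis $\{e_j\}_{j=1}^{k}$ of $\text{im}(T)$, with $Te_j=\lambda_j e_j$, as in the statement of Theorem \ref{theorem}. The entire content of that theorem is then that any orthonormal \emph{extension} $\{e_j\}_{j=1}^{2k}$ of this family gives a subspace $W=\text{span}\{\tfrac{1}{\sqrt{\lambda_j}}e_j+\sqrt{\tfrac{\lambda_j-1}{\lambda_j}}e_{j+k}\}$ lying in $\tilde{\Omega}(T)$. The eigenvalue hypothesis $\lambda_j\ge 1$ enters here exactly to guarantee that each coefficient $\sqrt{(\lambda_j-1)/\lambda_j}$ is a well-defined nonnegative real number, so that the defining vectors of $W$ genuinely make sense (and are unit vectors).

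The one thing that actually needs checking is the dimension bookkeeping, and this is where the rank hypothesis is used. I need $k$ additional vectors $e_{k+1},\dots,e_{2k}$ that are orthonormal and orthogonal to $e_1,\dots,e_k$; these can be chosen inside $\text{im}(T)^{\perp}$, which has dimension $n-k$. The assumption $k\le n/2$ gives $n-k\ge k$, so such a choice is possible. Having produced the extension, the corresponding $W$ lies in $\tilde{\Omega}(T)$ by Theorem \ref{theorem}, and the associated projection $P$ onto $W$ along $\text{im}(T)^{\perp}$ satisfies $P^*P=T$, which is the assertion. I expect no genuine obstacle: the corollary is essentially just the observation that the explicit set in Theorem \ref{theorem} is nonempty, the only quantitative input being the dimension count $n-k\ge k$ that makes the orthonormal extension available.
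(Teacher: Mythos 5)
Your proposal is correct and is exactly the argument the paper intends: the corollary is stated without proof as an immediate consequence of Theorem \ref{theorem}, and your observation that one only needs to exhibit a single orthonormal extension $\{e_j\}_{j=1}^{2k}$ (possible since $n-k\ge k$) is the right way to read it off. The only cosmetic remark is that the vectors $e_{k+1},\dots,e_{2k}$ are automatically forced into $\text{im}(T)^{\perp}$ by orthonormality, so no separate choice of ambient subspace is needed.
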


\begin{corollary}\label{C2}
If $T$ is a positive self-adjoint operator of rank $\le \frac{n}{2}$, then there is a
projection $P$ and a weight $v>0$ so that $T=v^2P^*P$.
\end{corollary}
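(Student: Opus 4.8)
The plan is to reduce Corollary \ref{C2} to Corollary \ref{C1} by a single scaling. Since $T$ is positive and self-adjoint, its nonzero eigenvalues $\lambda_1,\dots,\lambda_k$ are strictly positive real numbers; let $\lambda_{\min}=\min_j\lambda_j>0$. First I would form the rescaled operator $cT$, where $c=1/\lambda_{\min}$. Multiplying by a positive scalar preserves positivity, self-adjointness, and rank, so $cT$ is again a positive self-adjoint operator of rank $k\le\frac{n}{2}$. Its nonzero eigenvalues are exactly $c\lambda_j=\lambda_j/\lambda_{\min}\ge 1$, with equality attained at the smallest one. Hence $cT$ satisfies all the hypotheses of Corollary \ref{C1}.

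Next I would apply Corollary \ref{C1} to $cT$ to obtain a projection $P$ with $P^*P=cT$. Dividing through by $c$ gives $T=\frac{1}{c}P^*P=\lambda_{\min}P^*P$, so setting $v=\sqrt{\lambda_{\min}}>0$ yields $T=v^2P^*P$, exactly as required.

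There is essentially no obstacle here beyond choosing the scaling factor correctly: any $c\ge 1/\lambda_{\min}$ would work equally well, and the choice $c=1/\lambda_{\min}$ is simply the economical one that makes the weight $v$ as large as possible while keeping all rescaled eigenvalues at least $1$. The only point worth verifying explicitly is that positive scaling leaves the rank unchanged, so that the bound $k\le\frac{n}{2}$ is inherited by $cT$; this is immediate since $c\neq 0$.
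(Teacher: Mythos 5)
Your proof is correct and follows exactly the paper's own argument: scale $T$ by the reciprocal of its smallest nonzero eigenvalue so that Corollary \ref{C1} applies, then absorb the scaling into the weight $v=\sqrt{\lambda_{\min}}$. No differences worth noting.
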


\begin{proof}
Let $\lambda_k$ be the smallest non-zero eigenvalue of $T$.  So all nonzero eigenvalues
of $\frac{1}{\lambda_k}T$ are greater than or equal to 1 and by Corollary \ref{C1} there is a projection
$P$ so that $P^*P=\frac{1}{\lambda_k}T$.  Let $v=\sqrt{\lambda_k}$ to finish the proof.
\end{proof}

In the rest of this section we will analyze the case where $\text{rank}(T)>n/2$.

\begin{proposition}
Let $T$ be a positive self-adjoint operator of rank $k>\frac{n}{2}$ with eigenvectors
$\{e_j\}_{j=1}^n$ and respective eigenvalues $\{\lambda_j\}_{j=1}^n$.  The following
are equivalent:

(1)  There is a projection $P$ so that $T=P^*P$.

(2)  The nonzero eigenvalues of $T$ are greater than or equal to 1 and we have
\[
|\{j:\lambda_j >1\}| \le |\{j:\lambda_j=0\}|.
\]
In particular,
\[
|\{j:\lambda_j=1\}| \ge k-\left\lfloor\tfrac{n}{2}\right\rfloor .
\]
\end{proposition}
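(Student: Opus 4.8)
The plan is to set up notation counting eigenvalues by size and then argue each implication separately, reusing the machinery of Lemma \ref{Pe}, Lemma \ref{Lemma_eigenP*P}, and the two halves of the proof of Theorem \ref{theorem}. Write $a=|\{j:\lambda_j>1\}|$, $b=|\{j:\lambda_j=1\}|$ and $c=|\{j:\lambda_j=0\}|$; once we know (as we will derive in the forward direction, and as we assume in the reverse) that every nonzero eigenvalue is $\ge 1$, there are no eigenvalues in $(0,1)$, so $a+b=k$ and $c=n-k$. In these terms condition (2) says exactly ``all nonzero $\lambda_j\ge 1$ and $a\le c$,'' and the inequality on $|\{j:\lambda_j=1\}|$ will fall out of $b=k-a$ at the end.

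For (1) $\Rightarrow$ (2), suppose $T=P^*P$. Lemma \ref{Lemma_eigenP*P} gives immediately that every nonzero eigenvalue of $T$ is $\ge 1$. To get the counting inequality I would look only at the eigenvectors $e_1,\dots,e_a$ with $\lambda_j>1$. As in the first half of the proof of Theorem \ref{theorem}, the identity $\lambda_j=1+\|(P-I)e_j\|^2$ shows $\|(P-I)e_j\|=\sqrt{\lambda_j-1}\neq 0$, so the unit vectors $\tilde e_j:=(P-I)e_j/\sqrt{\lambda_j-1}$ are well defined; since $I-P$ is the projection onto $\ker P=\text{im}(T)^{\perp}$, each $\tilde e_j$ lies in $\text{im}(T)^{\perp}$. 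The point is that these $\tilde e_j$ are orthonormal: expanding $\langle Pe_i,Pe_j\rangle=\langle e_i+\sqrt{\lambda_i-1}\,\tilde e_i,\ e_j+\sqrt{\lambda_j-1}\,\tilde e_j\rangle$ (which is $0$ for $i\neq j$ by Lemma \ref{Pe}) and using $e_i\perp e_j$ together with $e_i\perp\tilde e_j$ (since $e_i\in\text{im}(T)$ while $\tilde e_j\in\text{im}(T)^{\perp}$), the expression collapses to $\sqrt{(\lambda_i-1)(\lambda_j-1)}\langle\tilde e_i,\tilde e_j\rangle$, forcing $\langle\tilde e_i,\tilde e_j\rangle=0$. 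Hence $\text{im}(T)^{\perp}$ contains $a$ orthonormal vectors, so $a\le\dim\text{im}(T)^{\perp}=n-k=c$, which is the claimed inequality. Finally $b=k-a\ge k-c=2k-n$, and since $k>n/2$ forces $k\ge\lceil n/2\rceil$, we get $2k-n\ge k-\lfloor n/2\rfloor$, proving the ``in particular'' clause.

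For (2) $\Rightarrow$ (1), I would build $P$ by hand, imitating the converse half of the proof of Theorem \ref{theorem} on the eigenvalue-${>}1$ block and taking the identity on the eigenvalue-$1$ block. Because $a\le c$, I can pick $a$ of the eigenvalue-$0$ eigenvectors, relabel them $g_1,\dots,g_a\in\text{im}(T)^{\perp}$, and set $w_j=\frac{1}{\sqrt{\lambda_j}}e_j+\sqrt{\frac{\lambda_j-1}{\lambda_j}}\,g_j$ for $j=1,\dots,a$. Letting $W=\text{span}\{w_1,\dots,w_a\}\oplus\text{span}\{f_1,\dots,f_b\}$ (the $f_i$ being the eigenvalue-$1$ eigenvectors) and $P$ the projection onto $W$ along $\text{im}(T)^{\perp}$, the same decompositions used in Theorem \ref{theorem} give $Pe_j=e_j+\sqrt{\lambda_j-1}\,g_j$ and then $P^*Pe_j=\lambda_j e_j$; meanwhile each $f_i\in W\cap W^*$ yields $P^*Pf_i=f_i$, and every eigenvalue-$0$ eigenvector lies in $\ker P=\text{im}(T)^{\perp}$ and is annihilated. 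Thus $P^*P$ agrees with $T$ on the full eigenbasis, whence $P^*P=T$.

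The main obstacle is the orthogonality-cum-dimension bookkeeping in the forward direction: the whole inequality $a\le c$ hinges on showing that the vectors $\tilde e_j$ attached to the eigenvalues $>1$ are not only confined to $\text{im}(T)^{\perp}$ but are mutually orthogonal, so that they genuinely consume $a$ dimensions there. Everything else is either a direct application of Lemma \ref{Lemma_eigenP*P} or a transcription of the explicit formulas already verified in Theorem \ref{theorem}.
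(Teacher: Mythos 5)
Your proof is correct, and while the reverse implication matches the paper's, your forward implication takes a genuinely different (and arguably cleaner) route. For $(2)\Rightarrow(1)$ both arguments split $T$ into the block with eigenvalues strictly greater than $1$ — whose rank is at most $n-k<\tfrac{n}{2}$ by the counting hypothesis — plus the orthogonal projection onto the eigenvalue-$1$ eigenspace, and handle the first block via Theorem \ref{theorem}; you simply write the resulting projection out explicitly, which has the side benefit of making visible that the auxiliary vectors $g_j$ must be chosen inside $\ker T$ so the two pieces annihilate each other (a point the paper glosses over when it asserts $P'\pi_3=\pi_3P'=0$). For $(1)\Rightarrow(2)$ the paper invokes Proposition \ref{bigrank} to write $P=P'+\pi_{W\cap W^*}$ and runs a dimension count on $W'+W'^*$, using that $P'$ maps $W'^*=\mathrm{span}\{e_j:\lambda_j>1\}$ bijectively onto $W'$, that $W'\cap W'^*=\{0\}$, and that both subspaces are orthogonal to the eigenvalue-$1$ eigenspace. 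You instead exhibit the explicit orthonormal family $\{(P-I)e_j/\sqrt{\lambda_j-1}\}_{\lambda_j>1}$ inside $\ker P=\mathrm{im}(T)^{\perp}$, extracting orthogonality from Lemma \ref{Pe} exactly as in the first half of the proof of Theorem \ref{theorem}, and conclude from $\dim \mathrm{im}(T)^{\perp}=n-k$. The two counts are equivalent in substance, but yours bypasses Proposition \ref{bigrank} entirely and identifies concretely which directions of $\ker T$ are ``consumed'' by the eigenvalues exceeding $1$, at the cost of redoing a computation the paper packages structurally; your arithmetic deriving the ``in particular'' clause from $k>\tfrac{n}{2}$ is also correct.
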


\begin{proof} Let $A_1 = \{j:\lambda_j >1\}$, $A_2 = \{j:\lambda_j=0\}$, and $A_3 = \{j:\lambda_j=1\}$, and let $\pi_i$ be the orthogonal projection onto $\text{span}\{e_j:j\in A_i\}$ for $i=1,2,3$.

$(1) \Rightarrow (2)$:  By Proposition \ref{bigrank}, we can write
\[ P = P'+\pi_{W\cap W^*},\]
where $\pi_{W\cap W^*}$ is the orthogonal projection onto $W\cap W^*$, and $P'$ is the projection
onto the orthogonal complement $W'$ of $W\cap W^*$ in W along $ker\ P + W\cap W^*$.  Define $W'^*\equiv \im\, P'^*$.  Then $P'$ is an invertible operator from $W'^*$ onto $W'$, $W'^* \perp W\cap W^*$ and  $W'\perp W\cap W^*$, and $W' \cap W'^* = \{0\}$.  Hence,
 \begin{eqnarray*}
  2 \dim\, W'^* &=&  \dim\, W' + \dim\, W'^*\\
  &=& \dim (W'+W'^*) \\
  &\le& \dim\, W'^* + \dim\, \text{span}\{e_j:j\in A_2\}.
 \end{eqnarray*}
 Since $W'^* = span\ \{e_j: j\in A_1\}$, it follows that $|A_1|\le |A_2|$.

$(2)\Rightarrow (1)$:  Let  $T_1=T(\pi_1+\pi_2)$, so $T=T_1+\pi_3$.  By our assumption
\[
\text{rank}\ T_1 \le \frac{n}{2},
\]
and all non-zero eigenvalues of $T_1$ are strictly greater than 1.  By Theorem \ref{theorem}
there is a projection $P'$ so that $P'^*P'=T_1$.  Let $P = P'+\pi_3$.
Then $P'\pi_3 = \pi_3 P' = 0$.  Hence, $P=P^2$ is a projection and
\[ P^*P = P'^*P' + \pi_3 = T_1+\pi_3 = T.\]
\end{proof}

\begin{remark}
Similar to the proof of Corollary \ref{C2}, if $T$ is a positive self-adjoint operator of
rank $> \frac{n}{2}$ with eigenvalues $\{\lambda_1 \ge \cdots \ge \lambda_k >0 =
\lambda_{k+1} = \cdots \lambda_n\}$, then there is a projection $P$ and a weight
$v=\sqrt{\lambda_k}$ so that $T = v^2P^*P$ if and only if
\[ |\{j:\lambda_j >\lambda_k\}| \le |\{j:\lambda_j =0\}|.\]
\end{remark}

\begin{proposition}\label{P1}
Let $T:\mathcal{H}_n\rightarrow\mathcal{H}_n$ be a positive, self adjoint operator of rank $k>\frac{n}{2}$ whose nonzero eigenvalues are all greater than or equal to 1.  If either

(1) $n$ is even, or

(2)  $n$ is odd and $T$ has at least one eigenvalue in the set $\{0,1,2\}$

then there are two projections $P_1$ and $P_2$ such that $T=P_1^*P_1+P_2^*P_2$.
\end{proposition}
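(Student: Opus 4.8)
The plan is to split the spectral decomposition of $T$ into two positive pieces $T_1,T_2$ with $T_1+T_2=T$, each realizable as $P_i^*P_i$, and then take the resulting $P_1,P_2$. Combining Corollary \ref{C1} with the rank $>\tfrac n2$ characterization proved above, a positive self-adjoint operator $T_i$ on $\mathcal H_n$ whose nonzero eigenvalues are all $\ge 1$ equals $P_i^*P_i$ for some projection $P_i$ if and only if the number of its eigenvalues $>1$ does not exceed the number of its zero eigenvalues. Writing $a=|\{j:\lambda_j>1\}|$, $b=|\{j:\lambda_j=1\}|$, $c=|\{j:\lambda_j=0\}|$ for $T$, we have $a+b+c=n$ and $a+b=k>\tfrac n2$, so $c<\tfrac n2$. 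By hypothesis every nonzero eigenvalue of $T$ is $\ge 1$, so any distribution of the spectral pieces that never creates an eigenvalue strictly between $0$ and $1$ will automatically satisfy the eigenvalue condition for both $T_i$; the whole game is therefore the counting inequality $|\{>1\}|\le|\{=0\}|$ for each piece.

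First I would try the simplest distribution: send each eigenvector of $T$ entirely into $T_1$ or into $T_2$, keeping its eigenvalue. If $T_1$ receives $a_1$ eigenvectors with eigenvalue $>1$ and $b_1$ with eigenvalue $1$ (and $T_2$ the complementary $a_2=a-a_1$, $b_2=b-b_1$), then the realizability criterion for $T_i$ reads $a_i\le n-(a_i+b_i)$, i.e.\ $2a_i+b_i\le n$. Setting $v:=2a_1+b_1$, moving a $(>1)$-eigenvector between the groups changes $v$ by $2$ and moving a $1$-eigenvector changes it by $1$, so the achievable values of $v$ are transparent. The two requirements $2a_1+b_1\le n$ and $2a_2+b_2=(2a+b)-v\le n$ are then equivalent to locating an achievable $v$ in the interval $[\,(2a+b)-n,\ n\,]$, whose length $2n-(2a+b)\ge 0$ (since $2a+b=a+k\le 2n$) guarantees it has an integer point.

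The case analysis now matches the hypotheses. If $b\ge 1$, every integer in $[0,2a+b]$ is achievable, so a valid $v$ exists; this in particular disposes of the $n$ odd case in which $1$ is an eigenvalue. If $b=0$ only even $v\in\{0,2,\dots,2k\}$ are achievable, and the target interval $[\,2k-n,\ n\,]$ has length $2c$; when $c\ge 1$ it has length $\ge 2$ and so contains an even integer, which covers the $n$ odd case in which $0$ is an eigenvalue (and the analogous even case). Finally, if $b=c=0$ the operator $T$ has full rank with all eigenvalues $>1$ and the interval degenerates to $\{n\}$: taking $a_1=a_2=\tfrac n2$ works when $n$ is even, establishing case (1), but the parity of $n$ blocks this when $n$ is odd, since $v=n$ is then not even.

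This leaves exactly one configuration: $n$ odd, $T$ of full rank, all eigenvalues $>1$. Here hypothesis (2) forces $2$ to be an eigenvalue, and the remedy is to split that eigenvalue rather than to assign it. Writing $\lambda_1=2$ on $e_1$ and letting $\pi_{\mathrm{span}\{e_1\}}$ be the rank-one orthogonal projection onto $e_1$, I put $\pi_{\mathrm{span}\{e_1\}}$ into each of $T_1,T_2$ and distribute the remaining $n-1$ eigenvectors (all with eigenvalue $>1$) evenly, $a_1=a_2=\tfrac{n-1}{2}$. Each $T_i$ then has one eigenvalue equal to $1$, exactly $\tfrac{n-1}{2}$ eigenvalues $>1$, and $\tfrac{n-1}{2}$ zero eigenvalues, so the criterion $|\{>1\}|\le|\{=0\}|$ holds with equality and $T_i=P_i^*P_i$; note that here $\operatorname{rank}T_i=\tfrac{n+1}{2}>\tfrac n2$, which is precisely why the rank $>\tfrac n2$ characterization, rather than Corollary \ref{C1}, is needed. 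In all cases $T_1+T_2=T$ and each $T_i=P_i^*P_i$, producing the two projections. The one genuinely delicate point is this parity obstruction in the odd case, together with the observation that splitting an eigenvalue $2$ into $1+1$ trades a single forbidden $(>1)$-eigenvalue for two harmless $1$-eigenvalues — exactly the flexibility that the hypothesis $\{0,1,2\}$ provides.
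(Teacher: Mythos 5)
Your proof is correct and takes essentially the same route as the paper: both split $T$ spectrally into two diagonal summands each realizable as $P_i^*P_i$ (via Corollary \ref{C1} together with the rank-$>\frac{n}{2}$ characterization), and both resolve the only obstructed configuration ($n$ odd, full rank, all eigenvalues $>1$) by splitting an eigenvalue $2$ as $1+1$ and adding the rank-one orthogonal projection onto that eigenvector to each summand. The only difference is organizational: you unify the paper's explicit case analysis into a single counting criterion $2a_i+b_i\le n$ on each piece, which is a tidy way to see that the hypotheses of the proposition are exactly what is needed.
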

\begin{proof}
 Let $\{e_j\}_{j=1}^n$ be an orthonormal basis of $\mathcal{H}_n$ consisting of
eigenvectors of $T$ with respective
eigenvalues $\{\lambda_j\}_{j=1}^n$, in decreasing order.
\vskip12pt
\noindent {\bf Case 1}:  $n$ is even.

Let $V=\text{span}\{e_j\}_{j\in I}$, $|I|=\frac{n}{2}$.  Note that $T=T\pi_V+T\pi_{V^{\perp}}$.  Also, since $T,\pi_V$, and $\pi_{V^{\perp}}$ are all diagonal with respect to $\{e_j\}_{j=1}^n$ it follows that $T$ commutes with both $\pi_V$ and $\pi_{V^{\perp}}$.  Therefore $(T\pi_V)^*=\pi_V^*T^*=\pi_VT=T\pi_V$, so by Theorem \ref{theorem} there is a projection $P_1$ such that $T\pi_V=P_1^*P_1$.  Similarly we can find a projection $P_2$ such that $T\pi_{V^{\perp}}=P_2^*P_2$.
\vskip12pt

\noindent {\bf Case 2}:  $n$ is odd and $T$ has an eigenvalue in the set $\{0,1,2\}$.
\vskip12pt
We will look at the case for each eigenvalue separately.
\vskip12pt
\noindent {\bf Subcase 1}:  $\lambda_n=0$.
\vskip12pt
Let $\mathcal{H}_1=\text{span}\{e_j:1\leq j\leq n-1\}$.  Then $\dim(\mathcal{H}_1)$ is even so we can apply the same argument as above to $\mathcal{H}_1$.

\noindent {\bf Subcase 2}:  $\lambda_n=1$.
\vskip12pt
Define $T_1,T_2$ by
\[ T_1 e_j = \begin{cases} Te_j & \mbox{if}\ \ j=1,2,\ldots,\frac{n-1}{2}\\
 0 & \mbox{otherwise}
 \end{cases} \]
 \[ T_2 e_j = \begin{cases} Te_j & \mbox{if}\ \ j=\frac{n-1}{2}+1,\ldots,n-1\\
 0 & \mbox{otherwise}
 \end{cases}\]
Then $\text{rank}(T_1)=\text{rank}(T_2)=\frac{n-1}{2}<\frac{n}{2}$ so by Corollary \ref{C1}, we can write
 \[ T_i = P_i^*P_i,\ \ i=1,2.\]
 Let $\pi$ be the orthogonal projection of $\cH$ onto $\text{span}\{e_n\}$ and let
 \[ Q = P_2+\pi,\]
which is clearly a projection.  Then we have
 \[ T = P_1^*P_1 + Q^*Q.\]

 \noindent {\bf Subcase 3}:  $\lambda_j=2$ for some $j$.
 \vskip12pt
 Without loss of geberality, re-index $\{\lambda_j\}$ so that $\lambda_n=2$.  Define $T_1,T_2,$ and $\pi$ as above.
 As in the previous case, define two projections $\{P_i\}_{i=1}^2$ so that
 \[ T_i = P_i^*P_i.\]
 Now let $Q_i = P_i+\pi$, $i=1,2$.  Then
 \[ T = Q_1^*Q_1+Q_2^*Q_2.\]
\end{proof}

\begin{corollary}\label{C5}
Let $T:\mathcal{H}_n\rightarrow\mathcal{H}_n$ be a positive, self adjoint operator of rank $k>\frac{n}{2}$.   There is a weight
$v$ and projections $\{P_i\}_{i=1}^2$ so that
\[ T = v^2P_1^*P_1 + v^2P_2^*P_2.\]
\end{corollary}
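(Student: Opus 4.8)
The plan is to deduce this from Proposition \ref{P1} by a rescaling argument, in direct analogy with the way Corollary \ref{C2} was obtained from Corollary \ref{C1}. The only point that requires any care is confirming that the rescaled operator automatically meets the eigenvalue hypothesis of Proposition \ref{P1} in the odd-dimensional case.

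First I would let $\lambda_k$ denote the smallest nonzero eigenvalue of $T$ and set $T' = \frac{1}{\lambda_k}T$. Since $T$ is positive and self-adjoint of rank $k > \frac{n}{2}$, and every nonzero eigenvalue of $T$ is at least $\lambda_k$, the operator $T'$ is again positive and self-adjoint, has the same rank $k > \frac{n}{2}$, and has all of its nonzero eigenvalues $\geq 1$. Thus $T'$ falls into the setting of Proposition \ref{P1}.

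Next I would check the parity hypotheses of Proposition \ref{P1}. If $n$ is even, part (1) applies at once. If $n$ is odd, I need $T'$ to possess an eigenvalue in $\{0,1,2\}$; but the smallest nonzero eigenvalue of $T'$ is exactly $\lambda_k/\lambda_k = 1$, so $T'$ has $1$ as an eigenvalue and part (2) applies. In either case Proposition \ref{P1} furnishes projections $P_1,P_2$ with $T' = P_1^*P_1 + P_2^*P_2$.

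Finally I would take $v = \sqrt{\lambda_k}$ and scale back, obtaining $T = \lambda_k T' = v^2 P_1^*P_1 + v^2 P_2^*P_2$, which is the desired conclusion. There is essentially no obstacle here: the normalization by $\lambda_k$ is precisely what forces the eigenvalue $1$ to appear, so the otherwise delicate odd-dimensional case of Proposition \ref{P1} is satisfied for free, and the corollary follows immediately.
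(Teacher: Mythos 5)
Your proof is correct and takes essentially the same route as the paper's: rescale by the smallest nonzero eigenvalue $\lambda_k$ so that $1$ appears as an eigenvalue, invoke Proposition~\ref{P1}, and set $v=\sqrt{\lambda_k}$. If anything you are slightly more careful than the paper, which in the even case cites Proposition~\ref{P1} directly without noting that the rescaling is still needed to secure the hypothesis that all nonzero eigenvalues be at least $1$.
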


\begin{proof}
Let $T$ have eigenvectors $\{e_j\}_{j=1}^n$ with respective eigenvalues
$\{\lambda_1 \ge \lambda_2 \ge \lambda_k>0=\lambda_{k+1}=\ldots=\lambda_n\}$.
If $n$ is even, we are done by Proposition \ref{P1}.   If $n$ is odd,
 let $T_1 = \frac{1}{\lambda_k}T$.  Then the smallest eigenvalue of $T_1$ equals 1.
By Proposition \ref{P1}, we can find projections $\{P_i\}_{i=1}^2$ so that
\[ \frac{1}{\lambda_k}T = T_1 = P_1^*P_1+P_2^*P_2.\]
Letting $v=\sqrt{\lambda_k}$ finishes the proof.
\end{proof}

It is important to note that, without weighting,  we can always write every positive self-adoint $T$ as the sum of $P_i^*P_i$ with three projections.

\begin{corollary}
If $T:\mathcal{H}_n\rightarrow\mathcal{H}_n$ is a positive, self adjoint operator of rank $k>\frac{n}{2}$ whose nonzero eigenvalues are all greater than or equal to 1, then
there are projections $\{P_i\}_{i=1}^3$ so that
\[ T = P_1^*P_1+P_2^*P_2+P_3^*P_3.\]
\end{corollary}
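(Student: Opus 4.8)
The plan is to invoke Proposition~\ref{P1} whenever it applies, and to use the third projection only to remove the single obstruction that Proposition~\ref{P1} leaves open, namely the case where $n$ is odd and $T$ has no eigenvalue in $\{0,1,2\}$. If $n$ is even, Proposition~\ref{P1} already gives projections $P_1,P_2$ with $T=P_1^*P_1+P_2^*P_2$, and taking $P_3=0$ (a projection with $P_3^*P_3=0$) completes this case.

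So suppose $n$ is odd. First I would peel off one eigenvector of $T$ to reduce to an even-dimensional problem. Fix an orthonormal eigenbasis $\{e_j\}_{j=1}^n$ with eigenvalues $\{\lambda_j\}$ and let $\mathcal{H}'=\text{span}\{e_j\}_{j=1}^{n-1}$, so that $\dim\mathcal{H}'=n-1$ is even and, writing $\pi_n$ for the orthogonal projection onto $\text{span}\{e_n\}$, we have $T=T\pi_{\mathcal{H}'}+\lambda_n\pi_n$. The nonzero eigenvalues of $T\pi_{\mathcal{H}'}$ are among those of $T$, hence $\ge 1$, so Proposition~\ref{P1} applied on $\mathcal{H}'$ produces projections $P_1,P_2$ of $\mathcal{H}'$ with $T\pi_{\mathcal{H}'}=P_1^*P_1+P_2^*P_2$ on $\mathcal{H}'$; I extend each to $\mathcal{H}_n$ by letting it act as $0$ on $e_n$. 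To realize $\lambda_n\pi_n$ as a single $P_3^*P_3$, I would split according to whether $T$ is singular: if $T$ has a zero eigenvalue I choose $e_n$ to be a null vector, so $\lambda_n=0$ and $P_3=0$ works; if $T$ is invertible I take $\lambda_n\ge 1$ and apply Corollary~\ref{C1} to the rank-one operator $\lambda_n\pi_n$ (whose only nonzero eigenvalue is $\lambda_n\ge 1$ and whose rank $1\le n/2$) to get a projection $P_3$ with $P_3^*P_3=\lambda_n\pi_n$. Summing the three pieces yields $T=P_1^*P_1+P_2^*P_2+P_3^*P_3$.

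This split is exactly what makes the rank hypothesis of Proposition~\ref{P1} hold on $\mathcal{H}'$: in the singular case $\text{rank}(T\pi_{\mathcal{H}'})=k>n/2>(n-1)/2$, while in the invertible case $\text{rank}(T\pi_{\mathcal{H}'})=n-1>(n-1)/2$ for $n\ge 3$. The remaining points are routine: with respect to the orthogonal decomposition $\mathcal{H}_n=\mathcal{H}'\oplus\text{span}\{e_n\}$, extending a projection of $\mathcal{H}'$ by $0$ on the complementary line again gives a projection of $\mathcal{H}_n$ whose $P^*P$ is the corresponding block-diagonal operator, so the extended $P_1^*P_1+P_2^*P_2$ equals $T\pi_{\mathcal{H}'}$ on all of $\mathcal{H}_n$; and although the image of the $P_3$ coming from Corollary~\ref{C1} need not lie in $\text{span}\{e_n\}$, only the identity $P_3^*P_3=\lambda_n\pi_n$ is used. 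The \emph{main obstacle} is precisely the configuration Proposition~\ref{P1} excludes, and the crux of the argument is that the third projection buys exactly the freedom to extract one eigenvector and hand the leftover even-dimensional operator back to Proposition~\ref{P1}.
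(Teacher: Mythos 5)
Your argument is correct and, once Proposition~\ref{P1} is unwound, it is essentially the paper's own proof: the paper simply partitions the eigenbasis into three blocks $J_1\cup J_2\cup J_3=\{1,\dots,n\}$ with $|J_i|<\tfrac{n}{2}$ and applies Corollary~\ref{C1} to each $T\pi_i$, and your step of peeling off $e_n$ and invoking the even-dimensional case of Proposition~\ref{P1} on $\mathcal{H}'$ realizes exactly that partition with block sizes $\tfrac{n-1}{2}$, $\tfrac{n-1}{2}$, $1$. The only differences are cosmetic (your explicit verification of the rank hypothesis on $\mathcal{H}'$, the extension by zero, and taking $P_3=0$ in the even or singular cases), so this is the same approach.
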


\begin{proof}
If $n$ is even, we can write $T$ as the sum of two projections.  Suppose $n$ is odd and let $\{e_j\}_{j=1}^n$ be an eigenbasis of $T$.  Suppose $J_1\cup J_2\cup J_3=\{1,...,n\}$ with $|J_i|<\frac{n}{2}$ and let $\pi_i$ be the orthogonal projection onto $\text{span}\{e_j:j\in J_i\}$ for $i=1,2,3$. Then $T=T(\pi_1+\pi_2+\pi_3)$ and $T\pi_i$ satisfies Corollary \ref{C1} for each $i$.
\end{proof}

We note before leaving this section that the classification results may also be expand a bit more to a set of self-adjoint operators $T$ that are not necessarily positive.

\begin{corollary}
Suppose $T=T_1-T_2$ where $T_1,T_2$ are
positive, self-adjoint operators.  Then there are projections $\{P_i\}_{i=1}^4$
and weights $\{v_i\}_{i=1}^2$ so that
\[ T = v_1^2(P_1^*P_1 + P_2^*P_2) - v_2^2(P_3^*P_3 +P_4^*P_4).\]
\end{corollary}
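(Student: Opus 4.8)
The plan is to reduce everything to the positive case already handled by Corollaries \ref{C2} and \ref{C5}. Since the decomposition $T = T_1 - T_2$ into positive self-adjoint pieces is handed to us by hypothesis, it suffices to express each of $T_1$ and $T_2$ in the form $v^2(Q_1^*Q_1 + Q_2^*Q_2)$ for a single weight $v > 0$ and two projections $Q_1, Q_2$, and then subtract.

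First I would record the uniform statement that \emph{every} positive self-adjoint operator $S$ on $\mathcal{H}_n$ can be written as $S = v^2(Q_1^*Q_1 + Q_2^*Q_2)$ with $v > 0$ and projections $Q_1, Q_2$. There are two cases according to the rank of $S$. If $\rank(S) > n/2$, this is exactly Corollary \ref{C5}. If $\rank(S) \le n/2$, Corollary \ref{C2} furnishes a weight $v > 0$ and a single projection $Q$ with $S = v^2 Q^* Q$; taking $Q_1 = Q$ and $Q_2 = 0$ (the zero operator is a projection, since $0^2 = 0$ and $0^* = 0$) gives $S = v^2(Q_1^*Q_1 + Q_2^*Q_2)$. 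The degenerate case $S = 0$ is covered by taking $v = 1$ and $Q_1 = Q_2 = 0$.

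Applying this to $T_1$ produces a weight $v_1$ and projections $P_1, P_2$ with $T_1 = v_1^2(P_1^*P_1 + P_2^*P_2)$, and applying it to $T_2$ produces a weight $v_2$ and projections $P_3, P_4$ with $T_2 = v_2^2(P_3^*P_3 + P_4^*P_4)$. Subtracting yields
\[
T = T_1 - T_2 = v_1^2(P_1^*P_1 + P_2^*P_2) - v_2^2(P_3^*P_3 + P_4^*P_4),
\]
which is the desired form.

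There is essentially no analytic obstacle here, as the corollary is a bookkeeping consequence of the earlier structure theory; the only points that require any care are the two that I have flagged, namely handling the low-rank case $\rank(S) \le n/2$ (where Corollary \ref{C5} does not apply and one must fall back on Corollary \ref{C2} padded by a zero projection) and the trivial case where $T_1$ or $T_2$ vanishes.
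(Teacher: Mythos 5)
Your proof is correct and is exactly the intended derivation: the paper states this corollary without proof, treating it as an immediate consequence of Corollaries \ref{C2} and \ref{C5} applied separately to $T_1$ and $T_2$. Your explicit handling of the low-rank case by padding with the zero operator (which is a projection under the paper's definition, since $0^2=0$) and of the case $T_i=0$ properly closes the small gaps the paper glosses over.
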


\section{Tight nonorthogonal fusion frames}\label{sec_tightFF}

In this section we address some issues regarding tight nonorthogonal fusion frames.  The first theorem addresses the issue of which sets of dimensions allow the existence of a tight nonorthogonal fusion frame.  The corresponding problem for fusion frames has received considerable attention proven to be quite difficult, see \cite{MR2671171}, \cite{MR2754774}, and \cite{Bownik}.

\begin{theorem}\label{T1}
Suppose $n_1+\cdots+n_m\geq n$, $n_i\leq\frac{n}{2}$.  Then there exists a tight nonorthogonal fusion frame $\{P_i\}_{i=1}^m$ ($v_i=1$ for every $i$) for $\mathcal{H}_n$ such that $\text{rank}(P_i)=n_i$ for $i=1,...,m$.
\end{theorem}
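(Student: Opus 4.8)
The plan is to reduce everything to Corollary \ref{C1}. Since $\text{rank}(P_i)=\dim W_i=\dim W_i^*=\text{rank}(P_i^*P_i)$ by Proposition \ref{basic}, it suffices to produce positive self-adjoint operators $T_1,\dots,T_m$ on $\mathcal{H}_n$ with $\text{rank}(T_i)=n_i$, all nonzero eigenvalues of each $T_i$ at least $1$, and $\sum_{i=1}^m T_i=\lambda I$ for some $\lambda>0$. Then Corollary \ref{C1} (applicable precisely because $n_i\le n/2$) yields projections $P_i$ with $P_i^*P_i=T_i$, and the identity $\sum_i P_i^*P_i=\lambda I$ exhibits a tight nonorthogonal fusion frame with $v_i=1$ and frame bound $\lambda$.

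I would build the $T_i$ as diagonal operators in a fixed orthonormal basis $\{f_j\}_{j=1}^n$, which turns the construction into a purely combinatorial assignment. Set $N=\sum_{i=1}^m n_i$ and $\lambda=\lceil N/n\rceil$. Lay out $N$ tokens in positions $0,1,\dots,N-1$, assigning the $n_i$ positions $\big[\sum_{i'<i}n_{i'},\ \sum_{i'\le i}n_{i'}\big)$ to operator $i$, and send position $p$ to coordinate $p\bmod n$. For operator $i$ let $S_i\subseteq\{1,\dots,n\}$ be the set of coordinates its tokens land on; on each $j\in S_i$ place a diagonal weight $w_{ij}\ge 1$, and set $T_i=\sum_{j\in S_i}w_{ij}\,\langle\cdot,f_j\rangle f_j$.

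Two facts make this work. First, because the $n_i$ positions of operator $i$ are consecutive and $n_i\le n/2<n$, they map to $n_i$ distinct coordinates, so $|S_i|=n_i$ and $\text{rank}(T_i)=n_i$. Second, distributing $N$ tokens cyclically over $n$ coordinates gives each coordinate $j$ a degree $d_j:=|\{i:j\in S_i\}|$ equal to either $\lfloor N/n\rfloor$ or $\lceil N/n\rceil$; since $N\ge n$ we get $1\le d_j\le\lambda$ for every $j$. It then remains to choose the weights: on each coordinate $j$ put weight $1$ on $d_j-1$ of the incident operators and weight $\lambda-d_j+1\ge 1$ on the remaining one. Then every $w_{ij}\ge 1$, so each $T_i$ has all nonzero eigenvalues $\ge 1$, and the $j$-th diagonal entry of $\sum_i T_i$ equals $\sum_{i:j\in S_i}w_{ij}=\lambda$, whence $\sum_i T_i=\lambda I$.

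The only substantive point, and the single place where both hypotheses enter, is the combinatorial assignment: $n_i\le n/2$ guarantees each operator occupies distinct coordinates (and is exactly the hypothesis needed for Corollary \ref{C1}), while $\sum n_i\ge n$ guarantees $\lambda\ge 1$ and that no coordinate is left uncovered, which is what forces $\sum_i T_i$ to be a nonzero multiple of the identity. Once the degree sequence is controlled in this way, assigning the weights and invoking Corollary \ref{C1} is routine, so I expect no genuine obstacle beyond verifying the balanced cyclic distribution.
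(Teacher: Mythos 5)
Your proof is correct and follows essentially the same route as the paper's: both reduce to Corollary \ref{C1} by writing $\lambda I$ as a sum of diagonal (in a fixed orthonormal basis) positive self-adjoint operators $T_i$ of rank $n_i$ whose nonzero eigenvalues are all at least $1$. The only difference is how the diagonal weights are produced --- the paper rescales $S^{-1}\pi_i$ where $S=\sum_i\pi_i$ for coordinate projections $\pi_i$ covering $\mathcal{H}_n$, while you assign them by an explicit balanced cyclic distribution --- which is a cosmetic rather than a substantive distinction.
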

\begin{proof}
Choose an orthonormal basis $\{e_j\}_{j=1}^n$ for $\mathcal{H}_n$ and choose a collection of subspaces $\{W_i\}_{i=1}^m$ such that: \\
1) $W_i=\text{span}\{e_j\}_{j\in J_i}$ with $|J_i|=n_i$ for each $i=1,...,m$, and \\
2) $W_1+\cdots+W_m=\mathcal{H}_n$. \\
Let $\pi_i$ be the orthogonal projection onto $W_i$ and let $S=\sum_{i=1}^m\pi_i$.  Observe that $I=S^{-1}S=\sum_{i=1}^mS^{-1}\pi_i$.  Since each $\pi_i$ is diagonal with respect to $\{e_j\}_{j=1}^n$ it follows that $S^{-1}$ commutes with $\pi_i$, so $S^{-1}\pi_i$ is positive and self adjoint for every $i=1,...,m$.  Let $\gamma$ be the smallest nonzero eigenvalue of any $S^{-1}\pi_i$, then $\frac{1}{\gamma}S^{-1}\pi_i$ satisfies the hypotheses of Corollary \ref{C1} so there is a projection $P_i$ so that $P_i^*P_i=\frac{1}{\gamma}S^{-1}\pi_i$, and we have
$$
\sum_{i=1}^mP_i^*P_i=\frac{1}{\gamma}I.
$$
\end{proof}

Theorem \ref{T1} should be compared with Theorem 3.2.2 in \cite{MR2671171}.  Also note that the proof of Theorem \ref{T1} is constructive, cf \cite{MR2754774}.  The next theorem deals with adding projections to a given nonorthogonal fusion frame it order to get a tight nonorthogonal fusion frame.  Somewhat surprisingly, this can always be achieved with only two projections.

\begin{theorem}\label{T2}
Let $\{P_i\}_{i=1}^m$ be projections on $\cH_n$, $n\ge 2$.  Then there are two projections
$\{P_i\}_{i=m+1}^{m+2}$ and a $\lambda$ so that
\[ \sum_{i=1}^{m+2}P_i^*P_i = \lambda I.\]
\end{theorem}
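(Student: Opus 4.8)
The plan is to reduce the problem to the positive-operator classification results of Section~\ref{sec_classification}. Set $T=\sum_{i=1}^m P_i^*P_i$. Each summand $P_i^*P_i$ is positive and self-adjoint, so $T$ is a positive, self-adjoint operator on $\cH_n$. If I can show that $\lambda I - T$ is itself expressible as $P_{m+1}^*P_{m+1}+P_{m+2}^*P_{m+2}$ for a suitable scalar $\lambda$, then summing gives $\sum_{i=1}^{m+2}P_i^*P_i=\lambda I$, which is exactly what we want. So the whole problem becomes: choose $\lambda$ large enough that $R:=\lambda I - T$ is a \emph{positive} self-adjoint operator to which Proposition~\ref{P1} (or Corollary~\ref{C5}) applies, giving a decomposition into two projections.

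First I would choose $\lambda$ to control the spectrum of $R$. Writing the eigenvalues of $T$ as $\mu_1\ge\cdots\ge\mu_n\ge 0$, the operator $R=\lambda I - T$ has eigenvalues $\lambda-\mu_j$ on the same eigenbasis. To make $R$ positive I need $\lambda\ge\mu_1$; to make every nonzero eigenvalue of $R$ at least $1$ (the hypothesis of Proposition~\ref{P1}) I want $\lambda-\mu_j\ge 1$ whenever $\lambda-\mu_j\ne 0$. Taking $\lambda=\mu_1+1$ forces all eigenvalues $\lambda-\mu_j=\mu_1-\mu_j+1\ge 1$, so in fact \emph{every} eigenvalue of $R$ is $\ge 1$ and $R$ has full rank $n>\tfrac{n}{2}$. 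Thus $R$ meets the rank and eigenvalue hypotheses of Proposition~\ref{P1}.

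The remaining obstacle is the parity condition in Proposition~\ref{P1}: when $n$ is odd, that proposition also requires $R$ to have an eigenvalue in $\{0,1,2\}$, and with the naive choice $\lambda=\mu_1+1$ there is no reason for this to hold. This is the step I expect to be the main difficulty, and I would handle it by choosing $\lambda$ more cleverly rather than by brute force. The smallest eigenvalue of $R$ is $\lambda-\mu_1$, so I would pick $\lambda$ so that $\lambda-\mu_1$ lands in $\{1,2\}$ while keeping all eigenvalues of $R$ at least $1$; concretely, setting $\lambda=\mu_1+1$ makes $\lambda-\mu_1=1\in\{0,1,2\}$, so the eigenvector for $\mu_1$ supplies the required eigenvalue. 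Hence both cases of Proposition~\ref{P1} are satisfied (even $n$ unconditionally, odd $n$ via the eigenvalue $1$), yielding projections $P_{m+1},P_{m+2}$ with $R=P_{m+1}^*P_{m+1}+P_{m+2}^*P_{m+2}$.

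Finally I would assemble the pieces: with this $R$ and $\lambda=\mu_1+1$,
\[
\sum_{i=1}^{m+2}P_i^*P_i = T + R = T+(\lambda I - T)=\lambda I,
\]
completing the proof. The hypothesis $n\ge 2$ is what guarantees $R$ has rank $n>\tfrac n2\ge 1$ so that Proposition~\ref{P1} genuinely applies rather than the small-rank case; I would double-check that boundary to make sure the rank-$>n/2$ regime is the correct one to invoke.
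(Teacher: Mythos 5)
Your proposal is correct and follows essentially the same route as the paper: set $S=\sum_{i=1}^m P_i^*P_i$, take $\lambda=\lambda_1+1$ so that $T=\lambda I-S$ has all eigenvalues $\geq 1$ with at least one equal to $1$, and invoke Proposition \ref{P1} to split $T$ into two projections. Your explicit check that the eigenvalue $1$ handles the odd-$n$ parity condition is exactly the (implicit) point in the paper's proof.
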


\begin{proof}
Let
\[ S = \sum_{i=1}^m P_i^*P_i,\]
and let $\lambda_1 \ge \lambda_2 \ge \cdots \ge \lambda_n\ge 0$ be the
eigenvalues of $S$.  Let $\lambda = \lambda_1+1$ and let
\[ T = \lambda I - S.\]
Then $T$ is a positive self-adjoint operator with all of its eigenvalues
$\ge 1$ and at least one eigenvalue
equal to one.   By Proposition \ref{P1}, we can find projections $\{P_i\}_{i=m+1}^{m+2}$ so
that
\[ T = P_{m+1}^*P_{m+1}+P_{m+2}^*P_{m+2}.\]
Thus,
\[ \lambda I = S+T = \sum_{i=1}^{m+2}P_i^*P_i.\]
\end{proof}

No such theorem exists for frames or regular (orthogonal) fusion frames.  In general we need to add $n-1$ vectors to a frame in $\mathcal{H}_n$ in order to get a tight frame (see Proposition 2.1 in \cite{CT}).  However, in this context Theorem \ref{T2} may be misleading, as the ranks of the projections we need to add could be quite large.  The next result tells us how to deal with the case where we want small rank projections.

\begin{proposition}
If $\{P_i\}_{i=1}^m$ are projections on $\cH_n$ and $k\le \frac{n}{2}$, there
are projections $\{Q_i\}_{i=1}^{L}$ with $L= \lceil \frac{n}{k} \rceil$
and $\text{rank}(Q_i)\leq k$, and a $\lambda$ so that
\[ \sum_{i=1}^m P_i^*P_i + \sum_{j=1}^LQ_i^*Q_i = \lambda I.\]
\end{proposition}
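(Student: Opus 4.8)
The plan is to mimic the strategy of Theorem \ref{T2}, but to split the ``completing'' operator $T=\lambda I - S$ into $L$ low-rank pieces rather than into two projections of possibly large rank. First I would set $S=\sum_{i=1}^m P_i^*P_i$, which is positive and self-adjoint, and fix an orthonormal eigenbasis $\{e_j\}_{j=1}^n$ of $S$ with eigenvalues $\lambda_1\ge\cdots\ge\lambda_n\ge 0$. Taking $\lambda=\lambda_1+1$, the operator $T=\lambda I-S$ is positive, self-adjoint, diagonal with respect to $\{e_j\}$, and every one of its eigenvalues $\lambda-\lambda_j$ satisfies $\lambda-\lambda_j\ge 1$.

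Next I would partition the index set $\{1,\dots,n\}$ into $L=\lceil n/k\rceil$ blocks $J_1,\dots,J_L$, each of cardinality at most $k$; this is possible precisely because $L=\lceil n/k\rceil$ forces $Lk\ge n$. Writing $V_j=\text{span}\{e_\ell:\ell\in J_j\}$ and $T_j=T\pi_{V_j}$, the fact that $T$ is diagonal in the basis $\{e_j\}$ makes each $T_j$ positive and self-adjoint, since it commutes with $\pi_{V_j}$ (exactly as in Case 1 of the proof of Proposition \ref{P1}), and we have $T=\sum_{j=1}^L T_j$.

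The key point is that each $T_j$ has rank $|J_j|\le k\le\frac{n}{2}$, while all of its nonzero eigenvalues are among the $\lambda-\lambda_\ell\ge 1$. Hence $T_j$ satisfies the hypotheses of Corollary \ref{C1}, so there is a projection $Q_j$ with $Q_j^*Q_j=T_j$, and moreover $\text{rank}(Q_j)=\text{rank}(T_j)=|J_j|\le k$. Summing over $j$ yields $\sum_{j=1}^L Q_j^*Q_j=T=\lambda I-S$, and therefore
\[
\sum_{i=1}^m P_i^*P_i+\sum_{j=1}^L Q_j^*Q_j=S+T=\lambda I,
\]
as required.

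I do not anticipate a genuine obstacle. Once $\lambda$ is taken large enough to push every eigenvalue of $T$ to at least $1$, the only things left to verify are that each block restriction has rank below $\frac{n}{2}$ (immediate from $k\le\frac{n}{2}$) and that its nonzero eigenvalues remain $\ge 1$ (guaranteed by the choice of $\lambda$), after which Corollary \ref{C1} does all the work. The lone bookkeeping remark is simply to confirm that $L=\lceil n/k\rceil$ blocks of size at most $k$ really do cover all $n$ indices, which is exactly the content of the ceiling.
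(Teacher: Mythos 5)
Your proof is correct and follows essentially the same route as the paper's: both set $S=\sum_i P_i^*P_i$, take $\lambda=\lambda_1+1$, partition the eigenbasis into $L=\lceil n/k\rceil$ blocks of size at most $k$, and apply Corollary \ref{C1} to each block restriction $T_\ell=(\lambda I-S)\pi_\ell$. Your write-up merely spells out a few details (positivity and self-adjointness of each $T_\ell$, and the rank bound) that the paper leaves implicit.
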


\begin{proof}
Let $S$ $S=\sum_{i=1}^m P_i^*P_i$ and assume $S$ has eigenvectors $\{e_j\}_{j=1}^n$ with respective
eigenvalues $\lambda_1 \ge \lambda_2 \ge \cdots \ge \lambda_n$.  Partition the set $\{1,...,n\}$ into sets $J_1,...,J_L$ with $|J_{\ell}|\leq k$ for every $\ell=1,...,L$.  Let $\pi_{\ell}$ denote the orthogonal projection onto $\text{span}\{e_j\}_{j\in J_{\ell}}$.  Set $\lambda=\lambda_1+1$ and let $T_{\ell}=(\lambda I-S)\pi_{\ell}$.  Then each $T_{\ell}$ satisfies the hypotheses of Corollary \ref{C1} so choose any projection $Q_{\ell}\in\Omega(T_{\ell})$.  Now we have that
\begin{eqnarray*}
\sum_{i=1}^M P_i^*P_i+\sum_{\ell=1}^LQ_{\ell}^*Q_{\ell}&=&S+\sum_{\ell=1}^LT_{\ell} \\
&=& S+\lambda I-S=\lambda I.
\end{eqnarray*}
\end{proof}

\subsection{2 projections}\label{2proj}

As an application of the results of the previous section we will give a complete description of when there are two projections $P_i:\mathcal{H}_n\rightarrow\mathcal{H}_n$,
$i=1,2$ such that
\begin{equation}\label{2}
P_1^*P_1+P_2^*P_2=\lambda I.
\end{equation}
Let $W_1=\text{im}(P_1),W_1^*=\text{im}(P_1^*),W_2=\text{im}(P_2),W_2^*=\text{im}(P_2^*)$.  We will examine this in several cases but first we make some general remarks.  Note that if $x\in W_1^*$ such that $P_1^*P_1x=\alpha x$ (for $\alpha\in\mathbb{R}$) then $P_2^*P_2x=(\lambda-\alpha)x$, so there is an orthonormal bases $\{e_j\}_{j=1}^n$ consisting of eigenvectors of both $P_1^*P_1$ and $P_2^*P_2$.  Furthermore, if $P_1^*P_1x=0$ then $P_2^*P_2x=\lambda x$, so $\ker{P_1}=W_1^{*\perp}\subseteq W_2^*$, and similarly $W_2^{*\perp}\subseteq W_1^*$.

It follows from (\ref{2}) that $\text{rank}(P_1)+\text{rank}(P_2)\geq n$.  We will examine the cases of equality and strict inequality separately.

\begin{proposition}
Suppose $P_1$ and $P_2$ are projections on $\mathcal{H}_n$ such that $P_1^*P_1+P_2^*P_2=\lambda I$ and that $\text{rank}(P_1)+\text{rank}(P_2)=n$.  Then either $\text{rank}(P_1)\neq\text{rank}(P_2)$ and $\lambda=1$ or $\text{rank}(P_1)=\text{rank}(P_2)=\frac{n}{2}$ and $\lambda\geq 1$.
\end{proposition}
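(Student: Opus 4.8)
The plan is to diagonalize $P_1^*P_1$ and $P_2^*P_2$ simultaneously and then extract everything from a single rank count. By the general remarks preceding the statement there is an orthonormal eigenbasis $\{e_j\}_{j=1}^n$ with $P_1^*P_1e_j=\alpha_je_j$ and $P_2^*P_2e_j=(\lambda-\alpha_j)e_j$. Lemma \ref{Lemma_eigenP*P} forces each of $\alpha_j$ and $\lambda-\alpha_j$ to lie in $\{0\}\cup[1,\infty)$; and since $\rank(P_1)+\rank(P_2)=n\ge 1$ at least one projection is nonzero, so $\lambda>0$ and the two eigenvalues attached to a given $e_j$ are never both $0$. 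First I would partition $\{1,\dots,n\}$ into $S_1=\{j:\alpha_j=0\}$, $S_2=\{j:\lambda-\alpha_j=0\}$, and $S_3=\{j:\alpha_j\ge1\text{ and }\lambda-\alpha_j\ge1\}$, with cardinalities $a,b,c$.

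Since $\rank(P_i)=\rank(P_i^*P_i)$ equals the number of nonzero eigenvalues, $\rank(P_1)=b+c$ and $\rank(P_2)=a+c$, so $\rank(P_1)+\rank(P_2)=(a+b+c)+c=n+c$. The hypothesis $\rank(P_1)+\rank(P_2)=n$ therefore forces $c=0$, i.e. $S_3=\emptyset$ and every $\alpha_j\in\{0,\lambda\}$. Consequently $P_1^*P_1$ has eigenvalues $0$ (multiplicity $a$) and $\lambda$ (multiplicity $b$), $P_2^*P_2$ has $0$ (multiplicity $b$) and $\lambda$ (multiplicity $a$), and $\rank(P_1)=b$, $\rank(P_2)=a$ with $a+b=n$. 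Because $\lambda$ occurs as a nonzero eigenvalue of $P_i^*P_i$, Lemma \ref{Lemma_eigenP*P} gives $\lambda\ge1$ outright, which already supplies the inequality in both alternatives. Thus the proposition reduces to the single implication: if $\rank(P_1)\ne\rank(P_2)$ (equivalently $a\ne b$), then $\lambda=1$.

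I would establish the contrapositive: assuming $\lambda>1$, show $a=b=\frac n2$. With $c=0$ the operator $T_1:=P_1^*P_1$ is positive self-adjoint of rank $b$ whose only nonzero eigenvalue is $\lambda>1$, so $1$ is an eigenvalue of $T_1$ of multiplicity $0$. If $b>\frac n2$ this is exactly the situation ruled out by the Corollary following Proposition \ref{bigrank}, which would give $\Omega(T_1)=\emptyset$ and contradict $P_1\in\Omega(T_1)$; hence $b\le\frac n2$, and symmetrically $a\le\frac n2$. Combined with $a+b=n$ this yields $a=b=\frac n2$, completing the dichotomy.

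Alternatively one can argue by hand, which also explains why the bound $\frac n2$ appears: with $c=0$ one has $W_1^*=\text{span}\{e_j:j\in S_2\}$ and $\ker P_1=W_1^{*\perp}=W_2^*$, so for $j\in S_2$ the vectors $f_j:=(I-P_1)e_j/\sqrt{\lambda-1}$ lie in $W_2^*$; the identity $\langle P_1e_j,P_1e_{j'}\rangle=\lambda\langle e_j,e_{j'}\rangle$ of Lemma \ref{Pe}, together with $e_j\perp(I-P_1)e_{j'}$, forces the $b$ unit vectors $\{f_j\}_{j\in S_2}$ to be orthonormal inside the $a$-dimensional space $W_2^*$, whence $b\le a$; symmetry gives $a\le b$. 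The main obstacle is precisely this step — recognizing that when $\lambda>1$ each projection consumes a fresh direction in the orthocomplement of its $W_i^*$, capping its rank at $\frac n2$. Once $a=b=\frac n2$ is known (or $\lambda=1$ is forced in the unequal-rank case), both stated alternatives follow immediately.
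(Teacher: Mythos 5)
Your proof is correct, and both closing arguments you offer (the appeal to the corollary of Proposition \ref{bigrank}, and the by-hand construction of the orthonormal vectors $f_j=(I-P_1)e_j/\sqrt{\lambda-1}$ inside $W_2^*$) go through. It is organized genuinely differently from the paper's. The paper splits at once on whether the ranks are equal: in the unequal case it takes $k=\rank(P_1)>n/2$, uses $\dim(W_1\cap W_1^*)\geq 2k-n>0$ to produce a nonzero $x$ with $P_1^*P_1x=x$, and then observes that $x\notin W_2^*=\ker P_1=(W_1^*)^{\perp}$ forces $P_2^*P_2x=0$ and hence $\lambda=1$; in the equal case it notes $W_1^*=W_2^{*\perp}$ and reads off $P_i^*P_i=\lambda\pi_{W_i^*}$ with $\lambda\geq1$ from Lemma \ref{Lemma_eigenP*P}. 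You instead do global spectral bookkeeping: the identity $\rank(P_1)+\rank(P_2)=n+|S_3|$ shows the hypothesis is exactly the statement that no eigendirection carries a nonzero eigenvalue for both operators, which collapses both spectra to $\{0,\lambda\}$, yields $\lambda\geq1$ uniformly, and reduces the whole proposition to the single implication ``$\lambda>1$ implies equal ranks,'' settled by the fact that a projection of rank $>n/2$ must have $1$ in the spectrum of $P^*P$. Both proofs ultimately rest on that same geometric fact (rank $>n/2$ forces $W\cap W^*\neq\{0\}$), but your rank identity is a cleaner way to see why the borderline case $\rank(P_1)+\rank(P_2)=n$ is special, and the contrapositive framing makes the presence of $\lambda\geq1$ in both alternatives transparent. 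The only content of the paper's argument not recovered explicitly by yours is the extra observation, recorded in its proof but not in the statement, that in the unequal-rank case both projections are in fact orthogonal; this follows from your setup too, since $\lambda=1$ and $S_3=\emptyset$ force every nonzero eigenvalue of each $P_i^*P_i$ to equal $1$.
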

\begin{proof}
First suppose without loss of generality that $\text{rank}(P_1)=k>\text{rank}(P_2)$.  In this case we have that $k>\frac{n}{2}$, so $\dim(W_1\cap W_1^*)\geq 2k-n>0$.  Then by Proposition \ref{bigrank} we have that $P_1=P_1'+\pi_{W_1\cap W_1^*}$ and $P_1^*P_1+P_2^*P_2=P_1'^*P_1'+\pi_{W_1\cap W_1^*}+P_2^*P_2$.  Let $x\in W_1\cap W_1^*$, then $P'x=0$, and since $x\not\in W_2^*$ it follows that $P_2x=0$.  Therefore $(P_1^*P_1+P_2^*P_2)x=x$ which means $\lambda=1$, both $P_1$ and $P_2$ are orthogonal projections, and $W_j^*=W_j$ $j=1,2$.

Now suppose that $n$ is even, and $\dim(W_1)=\dim(W_2)=\frac{n}{2}$.  In this case we have that $W_1^*=W_2^{*\perp}$, so it follows immediately that $P_1^*P_1=\lambda\pi_{W_1^*}$ and $P_2^*P_2=\lambda\pi_{W_2^*}$.
\end{proof}

\begin{proposition}
Suppose $P_1$ and $P_2$ are projections on $\mathcal{H}_n$ such that $P_1^*P_1+P_2^*P_2=\lambda I$ and that $\text{rank}(P_1)+\text{rank}(P_2)>n$.  Then $\text{rank}(P_1)=\text{rank}(P_2)$, $\lambda=2$, and $W_1^*\cap W_1=W_2^*\cap W_2$.
\end{proposition}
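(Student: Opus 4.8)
The plan is to exploit the common eigenbasis noted in the remarks preceding this subsection, reduce everything to a count of eigenvalue pairs, and then extract all three conclusions from a single injectivity estimate for projections.

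First I would fix an orthonormal basis $\{e_j\}_{j=1}^n$ simultaneously diagonalizing $P_1^*P_1$ and $P_2^*P_2$, writing $P_1^*P_1e_j=\alpha_j e_j$ so that $P_2^*P_2e_j=(\lambda-\alpha_j)e_j$. By Lemma \ref{Lemma_eigenP*P} each of $\alpha_j$ and $\lambda-\alpha_j$ lies in $\{0\}\cup[1,\infty)$, so every index falls into exactly one of three types: (A) $\alpha_j=0$; (B) $\alpha_j=\lambda$; (C) $\alpha_j\geq 1$ and $\lambda-\alpha_j\geq 1$. Writing $a,b,c$ for the number of indices of each type, I have $a=\dim\ker P_1=n-\text{rank}(P_1)$, $b=\dim\ker P_2=n-\text{rank}(P_2)$, and $a+b+c=n$, whence $c=\text{rank}(P_1)+\text{rank}(P_2)-n>0$. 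In particular type (C) is nonempty, and since a type-(C) pair forces $\lambda=\alpha_j+(\lambda-\alpha_j)\geq 2$, I already get $\lambda\geq 2$.

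The engine of the proof is the following estimate, valid for any projection $P$: the span $U$ of the eigenvectors of $P^*P$ with eigenvalue $>1$ satisfies $\dim U\leq\dim\ker P$. Indeed $\text{im}(I-P)=\ker P$, and $(I-P)$ is injective on $U$: if $v\in U$ with $(I-P)v=0$ then $v=Pv\in W$; as $v\in W^*$ as well, $v\in W\cap W^*$, which by Lemma \ref{Lemma_eigenP*P} is the $1$-eigenspace of $P^*P$ and hence orthogonal to $U$, forcing $v=0$. Applying this to $P_1$, the space $U_1=\text{span}\{e_j:\alpha_j>1\}$ has $\dim U_1=b+|\{j\in C:\alpha_j>1\}|$ (type (B) contributes since $\lambda>1$), so $b+|\{j\in C:\alpha_j>1\}|\leq a$; applying it to $P_2$ gives symmetrically $a+|\{j\in C:\lambda-\alpha_j>1\}|\leq b$.

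Adding these two inequalities yields $a+b+N\leq a+b$ with $N\geq 0$, forcing $N=0$ and $a=b$. From $a=b$ I conclude $\text{rank}(P_1)=\text{rank}(P_2)$. From $N=0$ no type-(C) index has $\alpha_j>1$ or $\lambda-\alpha_j>1$, so every type-(C) pair equals $(1,1)$; since type (C) is nonempty this gives $\lambda=2$. Finally, by Lemma \ref{Lemma_eigenP*P}, $W_1\cap W_1^*$ is the $1$-eigenspace of $P_1^*P_1$, which with $\lambda=2$ is exactly $\text{span}\{e_j:j\in C\}$; likewise $W_2\cap W_2^*$ is the $1$-eigenspace of $P_2^*P_2$, namely $\text{span}\{e_j:\lambda-\alpha_j=1\}=\text{span}\{e_j:j\in C\}$, so the two intersections coincide. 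I expect the main obstacle to be the injectivity estimate of the third paragraph — it is really a specialization of the earlier characterization of which $T$ with $\text{rank}(T)>\tfrac n2$ arise as $P^*P$, and the delicate point is that it must be applied to \emph{both} projections so that the bounds can be added; a single bound only relates the ranks and does not deliver the equalities $a=b$ and $\lambda=2$. Pinning down $\lambda=2$ also relies on type (C) being nonempty, which is precisely where the strict inequality $\text{rank}(P_1)+\text{rank}(P_2)>n$ enters.
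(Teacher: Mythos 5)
Your proof is correct and follows essentially the same route as the paper's: both diagonalize $P_1^*P_1$ and $P_2^*P_2$ in a common eigenbasis and run a dimension count, and your injectivity estimate $\dim\{\text{eigenvalue}>1\ \text{part}\}\le\dim\ker P$ is exactly the complementary form of the bound $\dim(W\cap W^*)\ge 2\,\mathrm{rank}(P)-n$ that the paper invokes. The only organizational difference is that you extract all three conclusions at once by summing the two inequalities, whereas the paper first rules out unequal ranks by a multiplicity count and then forces the middle $2k-n$ common eigenvalues to equal $1$.
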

\begin{proof}
First suppose $\dim(W_1)=k>\ell=\dim(W_2)$.  Note that $k>\frac{n}{2}$.  By the remarks above we know that $0$ must be an eigenvalue of $P_1^*P_1$ with multiplicity $n-k$, $\lambda$ must be an eigenvalue of $P_1^*P_1$ with multiplicity $n-\ell$, and $1$ must be an eigenvalue of $P_1^*P_1$ with multiplicity $\dim(W_1^*\cap W_2^*)\geq 2k-n$.  Adding up these multiplicities we get $(n-k)+(n-\ell)+(2k-n)=n+k-\ell=n$ which contradicts the fact that $k>\ell$.  Therefore, we may assume that $\dim(W_1)=\dim(W_2)$.

By the remarks above we can choose an orthonormal basis $\{e_j\}_{j=1}^n$ of $\mathcal{H}_n$ so that
\begin{eqnarray*}
P_1^*P_1e_j&=&\lambda e_j\text{ and }P_2^*P_2e_j=0\text{ for }j=1,...,n-k, \\
P_1^*P_1e_j&=&0\text{ and }P_2^*P_2e_j=\lambda e_i\text{ for }j=k+1,...,n.
\end{eqnarray*}
Since $\dim(W_1\cap W_1^*),\dim*(W_2\cap W_2^*)\geq 2k-n$ it follows that
$$
P_1^*P_1e_j=e_j=P_2^*P_2e_j\text{ for }j=n-k+1,...,k.
$$
Therefore $\lambda=2$ and $W_1\cap W_1^*=W_2\cap W_2^*$.
\end{proof}

Ideally we would like analogous theorems for any number of projections, but this seems to be quite a difficult problem.

\section{Maximal correlation between projections}\label{section:max}

Any tight nonorthogonal fusion frame $\{(P_i,v_i)\}_{i=1}^m$ induces a collection of self-adjoint, positive semi-definite operators $\{T_i\}_{i=1}^m=\{v^2_i P^*_iP_i\}_{i=1}^m$ satisfying $\sum_{i=1}^m T_i = AI$, where $A>0$ is the frame bound. For simplicity, we shall assume that the weights $\{v_i\}_{i=1}^m$ are scaled such that $A=1$. In this case, $\{T_i\}_{i=1}^m$ is also called a positive operator valued measure (POVM), cf.~\cite{J.M.Renes:2004aa}.

To study how much $\{T_i\}_{i=1}^m$ deviates from an orthogonal decomposition of the identity operator $I$, we aim to estimate
\begin{equation}\label{eq:correlation}
\max_{i\neq j} \langle T_i,T_j\rangle,
\end{equation}
from below. Here, $\langle T,R\rangle:=\trace(T^*R)$ is the standard inner product between any two linear operators $T$ and $R$ on $\mathcal{H}_n$, so that the Hilbert-Schmidt norm $\|T\|_{HS}$ of $T$ is induced by $\|T\|^2_{HS}:=\langle T,T\rangle$.

We obtain lower estimates on \eqref{eq:correlation} by generalizing the simplex bound as derived in \cite{Conway:1996aa}  for orthogonal projections of equal rank and later extended in \cite{Bachoc:2010aa} to orthogonal projections of mixed ranks:
\begin{proposition}\label{prop:simplex}
If $\{T_i\}_{i=1}^m$ is a collection of self-adjoint, positive semi-definite operators on $\mathcal{H}_n$ scaled such that $\sum_{i=1}^m \trace(T_i)=n$, then
\begin{equation}\label{eq:simplex 2}
\max_{i\neq j} \langle T_i,T_j\rangle \geq \frac{n-\sum_{i=1}^m\langle T_j,T_j\rangle}{m(m-1)}.
\end{equation}
Equality holds if and only if $\{T_i\}_{i=1}^m$ is equiangular and satisfies $\sum_{i=1}^m T_i=I$.
\end{proposition}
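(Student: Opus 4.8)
The plan is to run the standard simplex/Welch-type argument: bound the Hilbert--Schmidt norm of $S := \sum_{i=1}^m T_i$ from below using the trace normalization, expand this norm into diagonal and off-diagonal inner products, and then replace the maximum off-diagonal term by the average of all off-diagonal terms.

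First I would expand the square of the Hilbert--Schmidt norm of the sum,
\[
\Big\| \sum_{i=1}^m T_i \Big\|_{HS}^2 = \Big\langle \sum_{i=1}^m T_i, \sum_{j=1}^m T_j \Big\rangle = \sum_{i=1}^m \langle T_i, T_i\rangle + \sum_{i\neq j} \langle T_i, T_j\rangle,
\]
splitting the double sum into its diagonal and off-diagonal parts. Since $S$ is positive and self-adjoint with $\trace(S) = \sum_{i=1}^m \trace(T_i) = n$, its eigenvalues $\mu_1,\dots,\mu_n \ge 0$ satisfy, by Cauchy--Schwarz (equivalently the power-mean inequality),
\[
\|S\|_{HS}^2 = \sum_{k=1}^n \mu_k^2 \ge \frac{1}{n}\Big(\sum_{k=1}^n \mu_k\Big)^2 = \frac{n^2}{n} = n.
\]
Combining the two displays gives $\sum_{i\neq j} \langle T_i, T_j\rangle \ge n - \sum_{i=1}^m \langle T_i, T_i\rangle$. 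As there are exactly $m(m-1)$ ordered pairs $(i,j)$ with $i \neq j$, the maximum of the off-diagonal inner products is at least their average, which is precisely the asserted bound \eqref{eq:simplex 2}.

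For the equality characterization, both inequalities used must be tight simultaneously. The averaging step is an equality if and only if all off-diagonal terms $\langle T_i, T_j\rangle$ ($i \ne j$) are equal; since positivity of the $T_i$ guarantees $\langle T_i, T_j\rangle = \trace(T_iT_j) \ge 0$, this is exactly the equiangularity condition. The Cauchy--Schwarz step is an equality if and only if all eigenvalues $\mu_k$ of $S$ coincide, and, together with $\sum_k \mu_k = n$ over $n$ eigenvalues, this forces $\mu_k = 1$ for every $k$, i.e.\ $S = \sum_{i=1}^m T_i = I$. I do not anticipate a genuine obstacle here: the inequality is a short two-step estimate, and the only point requiring care is this equality analysis, in particular confirming that equality in $\sum_k \mu_k^2 \ge (\sum_k \mu_k)^2/n$ forces the eigenvalues to be equal and hence pins down $S = I$.
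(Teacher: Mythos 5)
Your proof is correct, and it is precisely the standard simplex-bound argument that the paper itself defers to (the paper omits the proof and cites Bachoc--Ehler, whose argument is exactly this expansion of $\|\sum_i T_i\|_{HS}^2$ combined with the trace--Cauchy--Schwarz bound and averaging over the $m(m-1)$ off-diagonal pairs). Both the inequality and the equality analysis are handled correctly, so your write-up in effect supplies the detail the paper leaves out.
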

Here, we say that $\{T_i\}_{i=1}^m$ is equiangular if $\langle T_i,T_j\rangle$ does not depend on the choice of $i\neq j$. To verify Proposition \ref{prop:simplex}, we can follow the lines in \cite{Bachoc:2010aa}, so that the detailed proof is omitted here.

Being equiangular is a strong requirement, and we find a bound on the maximal number of operators that satisfy Theorem \ref{prop:simplex} with equality. The analogue of the following theorem by means of orthogonal projectors was derived in \cite{Bachoc:2010aa}:
\begin{theorem}\label{th:equiangular bounds}
If $\{T_i\}_{i=1}^m$ is a collection of pairwise distinct, equiangular, self-adjoint, positive semi-definite operators on $\mathcal{H}_n$ such that $\sum_{i=1}^m T_i=I$ and  $\| T_i\|^2_{HS}=:c$ does not depend on $i$, then $m\leq n^2$. If, in addition, $\mathcal{H}_n$ is a real Hilbert space, then $m\leq 1/2 n(n+1)$.
\end{theorem}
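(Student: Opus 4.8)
The plan is to show that the stated hypotheses force the operators $\{T_i\}_{i=1}^m$ to be \emph{linearly independent} inside the real vector space $\mathcal{S}$ of self-adjoint operators on $\mathcal{H}_n$, and then to read off the bound from $\dim_{\mathbb{R}}\mathcal{S}$. Recall that under the Hilbert-Schmidt inner product $\langle T,R\rangle=\trace(TR)$ the space $\mathcal{S}$ is a real inner product space (on self-adjoint operators $\langle T,R\rangle$ is real-valued) of dimension $n^2$ when $\mathcal{H}_n$ is complex, since Hermitian matrices have $n$ real diagonal entries and $n(n-1)$ real off-diagonal degrees of freedom, and of dimension $\tfrac{1}{2}n(n+1)$ when $\mathcal{H}_n$ is real. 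Once linear independence is established, $m$ cannot exceed $\dim_{\mathbb{R}}\mathcal{S}$, which is exactly the asserted bound in each case. This mirrors the ``absolute bound'' strategy used for orthogonal projectors in \cite{Bachoc:2010aa}.

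To prove linear independence I would study the Gram matrix $G\in\mathbb{R}^{m\times m}$ with entries $G_{ij}=\langle T_i,T_j\rangle$. Writing $c=\|T_i\|^2_{HS}$ and $a=\langle T_i,T_j\rangle$ for $i\neq j$ (well-defined by equiangularity), we have $G=(c-a)I_m+aJ_m$, where $J_m$ is the all-ones matrix. This matrix has eigenvalue $c-a$ with multiplicity $m-1$ and eigenvalue $c+(m-1)a$ with multiplicity $1$. The $\{T_i\}$ are linearly independent precisely when $G$ is nonsingular, so it suffices to show that $G$ is positive definite, i.e. that both eigenvalues are strictly positive.

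The positivity of the two eigenvalues rests on three observations. First, $c>0$: since $\sum_{i=1}^m T_i=I\neq 0$ the operators cannot all vanish, and by the equal-norm hypothesis $c=\|T_i\|^2_{HS}$ is then strictly positive. Second, $a\geq 0$: for positive semi-definite $T_i,T_j$ one has $\langle T_i,T_j\rangle=\trace\bigl(T_i^{1/2}T_jT_i^{1/2}\bigr)\geq 0$, because $T_i^{1/2}T_jT_i^{1/2}$ is positive semi-definite. Third, $c>a$ strictly: by Cauchy-Schwarz $a\leq\|T_i\|_{HS}\|T_j\|_{HS}=c$, with equality only when $T_i$ and $T_j$ are proportional, which (having equal norm) would force $T_i=T_j$ and contradict pairwise distinctness. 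Combining these, $c-a>0$ and $c+(m-1)a>0$, so $G$ is positive definite, the $\{T_i\}$ are linearly independent, and the bound follows.

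The main thing to get right is the inequality $a\geq 0$. Without it the second eigenvalue $c+(m-1)a$ could vanish for large $m$, leaving $G$ of rank only $m-1$ and yielding the weaker bound $m\leq \dim_{\mathbb{R}}\mathcal{S}+1$. It is precisely positive semi-definiteness, used via $\trace(T_iT_j)\geq 0$, that upgrades this to the sharp $m\leq n^2$ (respectively $m\leq\tfrac12 n(n+1)$), so I would state that step carefully. The remaining ingredients---strict Cauchy-Schwarz from distinctness and $c>0$ from $\sum_{i=1}^m T_i=I$---are routine, and the dimension count for $\mathcal{S}$ is standard.
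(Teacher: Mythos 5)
Your proof is correct, and it takes a genuinely different route from the paper's, even though both reduce the theorem to linear independence of $\{T_i\}_{i=1}^m$ plus a dimension count of the real space of self-adjoint operators. The paper rules out a nontrivial dependence $0=\sum_i\alpha_iT_i$ by pairing with $T_j$ and tracing, concluding that all $\alpha_j$ equal a common value $\gamma$, and then invoking the identity $mc+m(m-1)\beta=n$ (obtained by tracing $\bigl(\sum_iT_i\bigr)^2=I$) to show $\gamma\neq 0$ would force $n=0$; the hypothesis $\sum_iT_i=I$ is thus essential to its contradiction. You instead diagonalize the Gram matrix $(c-a)I_m+aJ_m$ and prove it is positive definite using $a\geq 0$ (from positive semi-definiteness, via $\trace\bigl(T_i^{1/2}T_jT_i^{1/2}\bigr)\geq 0$) together with $a<c$ (strict Cauchy--Schwarz from pairwise distinctness and equal norms); the condition $\sum_iT_i=I$ enters only to guarantee $c>0$. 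Your argument therefore proves something slightly stronger -- any family of pairwise distinct, equal-norm, equiangular, nonzero, positive semi-definite self-adjoint operators is linearly independent -- and it makes explicit where positive semi-definiteness is used, which the paper's proof does not need at that step. Both yield the same bounds $m\leq n^2$ and $m\leq\tfrac12 n(n+1)$.
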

\begin{proof}
Let $\beta:=\langle T_i,T_j  \rangle\neq c$, $i\neq j$. Cauchy-Schwartz implies that $\beta<c$. Since $\sum_{i,j}T_iT_j = I$, applying to both sides the trace yields $mc+m(m-1)\beta= n$ or equivalently
\begin{equation}\label{eq:1}
\beta = \frac{n-mc}{m(m-1)}.
\end{equation}
 In order to verify that $\{T_i\}_{i=1}^m$ is linearly independent in the collection of linear operators on $\mathcal{H}_n$, we assume that $0=\sum_{i=1}^m\alpha_i T_i$. Applying $T_j$ and taking the trace yields $0=\alpha_i(c-\beta)+\beta\sum_{i=1}^m\alpha_j$. Since $c\neq \beta$, we derive that $\alpha_j=\frac{\beta}{\beta-c}\sum_{i=1}^m\alpha_i=:\gamma$ does not depend on the choice of $j$. We assume that $\gamma\neq 0$ and obtain $(m-1)\beta= -c$. By using \eqref{eq:1}, the latter induces $n=0$. Thus, $\gamma$ must be zero, which implies that $\alpha_i=0$, for all $i=1,\ldots,m$. Finally, we have shown that $\{T_i\}_{i=1}^m$ is linearly independent in the space of linear operators on $\mathcal{H}_n$. Since $\mathcal{H}_n$ is $n$-dimensional, we obtain $m\leq n^2$.

 If $\mathcal{H}_n$ is a real Hilbert space, then the collection of self-adjoint operators form an $1/2n(n+1)$ dimensional subspace of the linear operators on $\mathcal{H}_n$, so that we derive $m\leq  1/2n(n+1)$.

\end{proof}

\section{Random nonorthogonal fusion frames}
Probabilistic versions of frames have been introduced in \cite{Ehler:2010aa,Okoudjou:2010aa,Ehler:2011uq}. Here, we extend the concept to nonorthogonal fusion frames.

Let $\Omega$ be a locally compact Hausdorff space and $\mathcal{B}(\Omega)$ be the Borel-sigma algebra on $\Omega$ endowed with a probability measure $\mu$.
We denote the collection of projections on $\mathcal{H}_n$ by $\mathcal{P}_{n}$, endowed with the induced Borel sigma algebra. We say that a random projector $P:\Omega \rightarrow \mathcal{P}_{n}$, is a random nonorthogonal fusion frame if there are nonnegative constants $A$ and $B$ such that
\begin{equation*}
A\|x\|^2 \leq \int_{\Omega} \|P(w)x\|^2 d\mu(\omega) \leq B\|x\|^2,\quad\text{for all $x\in\mathcal{H}_n$.}
\end{equation*}
The random projector $P$ is called tight if we can choose $A=B$. The random analysis operator $F$ is defined by
\begin{equation*}
F : \mathcal{H}_n \rightarrow L_2(\Omega,\mathcal{H}_n,\mu),\quad x\mapsto (\omega\mapsto P(\omega)x).
\end{equation*}
Its adjoint operator $T^*$ is the random synthesis operator
\begin{equation*}
F^* : L_2(\Omega,\mathcal{H}_n,\mu) \rightarrow \mathcal{H}_n,\quad f\mapsto \int_{\Omega} P(\omega)^*f(\omega) d\mu(\omega).
\end{equation*}
The random nonorthogonal fusion frame operator $S=F^* F$ then is
\begin{equation*}
S :\mathcal{H}_n\rightarrow\mathcal{H}_n,\quad x\mapsto \int_{\Omega} P(\omega)^* P(\omega) x d\mu(\omega).
\end{equation*}
Thus, $S=\int_{\Omega} P(\omega)^* P\omega  d\mu(\omega)$ has a matrix representation
$\big(\sum_{i=1}^m \langle P^*_iP_ie_k,e_l\rangle\big)_{i,j}$, where $\{e_j\}_{j=1}^n$ is an orthonormal basis for $\mathcal{H}_n$. Moreover, we obtain
\begin{equation*}
A n =   \sum_{j=1}^n A \|e_j\|^2 = \sum_{j=1}^n \int_{\Omega} \|P(\omega)e_j\|^2 d\mu(\omega)\\
 =  \int_{\Omega}\sum_{j=1}^n \langle P(\omega)e_j,P(\omega)e_j\rangle d\mu(\omega).
\end{equation*}
Thus, if $\mu$ is a tight random nonorthogonal fusion frame for $\mathcal{H}_n$, then the frame bound $A$ equals $\frac{1}{n}\int_{\Omega} \trace(P(\omega)^*P(\omega)) d\mu(\omega)$.

Next, we present a construction of tight (random) nonorthogonal fusion frames that is based on finite groups. Recall that a finite subgroup $G$ of the unitary operators $O(\mathcal{H}_n)$ is called \textit{irreducible} if each orbit $Gx$, for $0\neq x\in\mathcal{H}_n$, spans $\mathcal{H}_n$. In other words, any $G$-invariant subspace is trivial, i.e., either $\mathcal{H}_n$ or $\{0\}$. The following result is a generalization of Theorem 6.3 in \cite{Vale:2005aa}, where finite frames were considered:
\begin{theorem}\label{theorem:construct random tight}
If $P$ is a nontrivial random projection and $G$ is an irreducible finite subgroup of $O(\mathcal{H}_n)$, then $\frac{1}{|G|}\sum_{g\in G} g^*Pg$ is a tight random nonorthogonal fusion frame.
\end{theorem}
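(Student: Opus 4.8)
The plan is to read the displayed object not as a single operator but as a \emph{random projection} on the enlarged probability space $\Omega\times G$, where $G$ is equipped with the normalized counting measure $\nu$, and then to show that the resulting fusion frame operator is a positive scalar multiple of the identity by a group-averaging (Schur) argument. Concretely, I would define $\tilde P:\Omega\times G\to\mathcal{P}_n$ by $\tilde P(\omega,g)=g^*P(\omega)g$. Since each $g\in G\subseteq O(\mathcal{H}_n)$ is unitary, $(g^*P(\omega)g)^2=g^*P(\omega)^2g=g^*P(\omega)g$, so $\tilde P(\omega,g)$ is indeed a projection and $\tilde P$ is a legitimate random projection on $(\Omega\times G,\mu\otimes\nu)$; this is the precise meaning of the expression $\frac{1}{|G|}\sum_{g\in G}g^*Pg$.

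Next I would compute its random fusion frame operator. Writing $S_0=\int_{\Omega}P(\omega)^*P(\omega)\,d\mu(\omega)$ for the (positive, self-adjoint) fusion frame operator of $P$ itself, and using $(g^*Pg)^*(g^*Pg)=g^*P^*Pg$ together with Fubini, one gets
\[
S=\int_{\Omega}\frac{1}{|G|}\sum_{g\in G}g^*P(\omega)^*P(\omega)g\,d\mu(\omega)=\frac{1}{|G|}\sum_{g\in G}g^*S_0g .
\]
The crux of the argument is then to show $S=\lambda I$. The key observation is that $S$ commutes with every $h\in G$: reindexing $g\mapsto gh$ yields
\[
h^*Sh=\frac{1}{|G|}\sum_{g\in G}(gh)^*S_0(gh)=\frac{1}{|G|}\sum_{g\in G}g^*S_0g=S,
\]
and since $h^*=h^{-1}$ this gives $Sh=hS$ for all $h\in G$. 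Rather than invoking Schur's lemma directly (which is delicate over a real Hilbert space, where the commutant of an irreducible representation need not be scalar), I would exploit that $S$ is self-adjoint: each eigenspace of $S$ is $G$-invariant, so by irreducibility of $G$ it is either $\{0\}$ or all of $\mathcal{H}_n$; as the eigenspaces partition $\mathcal{H}_n$ and at least one is nonzero, $S$ has a single eigenvalue, i.e.\ $S=\lambda I$. This self-adjoint eigenspace argument handles the real and complex cases uniformly.

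Finally I would verify $\lambda>0$, which is where nontriviality of $P$ enters. Because $\trace(g^*S_0g)=\trace(S_0)$ for each $g$, we have $\trace(S)=\trace(S_0)$, and since $P$ is nontrivial, $S_0$ is a nonzero positive operator, so $\trace(S_0)>0$. Comparing with $\trace(\lambda I)=n\lambda$ gives $\lambda=\trace(S_0)/n>0$, and therefore $A=B=\lambda$ serve as equal frame bounds, exhibiting $\tilde P$ as a tight random nonorthogonal fusion frame. I expect the only real obstacle to be expository rather than mathematical: making precise that $\frac{1}{|G|}\sum_{g\in G}g^*Pg$ denotes the random projection $\tilde P$ on $\Omega\times G$, and phrasing the Schur step so that it is valid over $\mathbb{R}$ as well as $\mathbb{C}$.
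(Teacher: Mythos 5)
Your proposal is correct and follows essentially the same route as the paper: show the averaged fusion frame operator commutes with every element of $G$, observe that its eigenspaces are therefore $G$-invariant, and invoke irreducibility to conclude it is a positive multiple of the identity. Your version is somewhat more careful on two points the paper glosses over (the reformulation as a random projection on $\Omega\times G$, and the trace computation $\lambda=\trace(S_0)/n>0$ in place of the paper's appeal to nontriviality of $P$), but the underlying argument is the same.
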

\begin{proof}
One can directly check that the fusion frame operator $S:\mathcal{H}_n\rightarrow\mathcal{H}_n$,
\begin{equation*}
x\mapsto \frac{1}{|G|}\sum_{g\in G} \int_{\Omega}g^*P(\omega)^*gg^*P(\omega)gxd\mu(\omega)=\frac{1}{|G|}\sum_{g\in G} \int_{\Omega}g^*P(\omega)^*P(\omega)g xd\mu(\omega)
\end{equation*}
 is self-adjoint and positive semi-definite. Since the identity is an element in $G$ and $P$ is not the trivial random projection, $S$ cannot be the zero mapping, so that it has a positive eigenvalue $\lambda$. One checks that each $g\in G$ commutes with $S$. Thus, the $\lambda$-eigenspace is a $G$-invariant subspace. The irreducibility implies that the eigenspace is the full space $\mathcal{H}_n$, so that $S$ is a multiple of the identity.
\end{proof}

For the sake of completeness, we also formulate Theorem \ref{theorem:construct random tight} in terms of finite nonorthogonal fusion frames:
\begin{corollary}
If $P$ is a projection and $G$ an irreducible finite subgroup of $O(\mathcal{H}_n)$, then $\{g^*Pg\}_{g\in G}$ is a tight nonorthogonal fusion frame.
\end{corollary}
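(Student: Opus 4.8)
The plan is to deduce the corollary directly from Theorem \ref{theorem:construct random tight} by treating the fixed projection $P$ as a constant (deterministic) random projection $P(\omega)\equiv P$ on a one-point probability space. Under that identification the random fusion frame operator of the theorem collapses to $\frac{1}{|G|}\sum_{g\in G}g^*P^*Pg$, which the theorem asserts is a multiple of the identity; since rescaling a fusion frame operator by the positive constant $|G|$ preserves tightness, $\{g^*Pg\}_{g\in G}$ is a tight nonorthogonal fusion frame. For a self-contained write-up, however, I would simply rerun the proof of Theorem \ref{theorem:construct random tight} in the deterministic setting, as sketched below.

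First I would record that each $g^*Pg$ is a projection, since $gg^*=I$ gives $(g^*Pg)^2=g^*P^2g=g^*Pg$. I would then compute the fusion frame operator, using $(g^*Pg)^*=g^*P^*g$ together with $gg^*=I$:
\[
S:=\sum_{g\in G}(g^*Pg)^*(g^*Pg)=\sum_{g\in G}g^*P^*Pg,
\]
which is manifestly self-adjoint and positive semi-definite. The crux is to show that $S$ commutes with $G$. For $h\in G$, I would reindex the sum through the bijection $g\mapsto gh=:g'$ of $G$, obtaining
\[
h^*Sh=\sum_{g\in G}(gh)^*P^*P(gh)=\sum_{g'\in G}(g')^*P^*Pg'=S,
\]
so that $Sh=hS$ for every $h\in G$.

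Finally, because $P$ is nontrivial the identity summand $P^*P$ is nonzero, hence $S\neq 0$ and $S$ has a positive eigenvalue $\lambda$. Its $\lambda$-eigenspace is $G$-invariant, since $Sx=\lambda x$ forces $S(gx)=g(Sx)=\lambda(gx)$, and it is nonzero, so irreducibility of $G$ forces it to be all of $\mathcal{H}_n$; therefore $S=\lambda I$ with $\lambda>0$, which is precisely tightness. I do not expect a genuine obstacle, as the corollary is essentially a transcription of the theorem's proof to the finite deterministic case. The only points needing care are the reindexing that delivers the $G$-invariance of $S$ and the observation that $P$ must be nonzero (nontrivial) in order to guarantee a strictly positive frame bound.
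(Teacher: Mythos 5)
Your proposal is correct and follows exactly the paper's route: the corollary is stated there as the deterministic specialization of Theorem \ref{theorem:construct random tight}, and your self-contained rerun (reindexing to show $S$ commutes with $G$, then invoking irreducibility on a nonzero eigenspace) is precisely the argument given in that theorem's proof. Your remark that $P$ must be nontrivial to get a strictly positive frame bound is a fair observation, matching the hypothesis of the theorem that the corollary's statement omits.
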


Next, we revisit some ideas of Section \ref{section:max} and discuss correlations in a random regime.
If $P$ is a random projection, then we call
\begin{equation*}
\mathcal{R}(P)=\int_{\Omega}\int_{\Omega} \langle P(\omega)^* P(\omega) ,  P(\omega')^* P(\omega')\rangle  d\mu(\omega) d\mu(\omega')
\end{equation*}
its \textit{random nonorthogonal fusion frame potential}.
\begin{proposition}\label{theorem:pot prob general}
If $P$ is a nontrivial random projection, then
\begin{equation}\label{eq:inequality PNFFP}
\mathcal{R}(P)\geq \frac{M^2}{n},\quad \text{where } M=\int_{\Omega} \|P(\omega)\|^2_{HS}d\mu(\omega),
\end{equation}
and equality holds if and only if $P$ is tight.
\end{proposition}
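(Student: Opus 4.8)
The plan is to observe that the double integral defining $\mathcal{R}(P)$ collapses to the squared Hilbert-Schmidt norm of the fusion frame operator $S = \int_{\Omega} P(\omega)^* P(\omega)\,d\mu(\omega)$, and then to finish with the Cauchy-Schwarz inequality applied to the eigenvalues of $S$.

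First I would write $T(\omega) = P(\omega)^* P(\omega)$, which is self-adjoint and positive semi-definite, and record the two identities $\|P(\omega)\|^2_{HS} = \trace(P(\omega)^* P(\omega)) = \trace(T(\omega))$ and $\langle T(\omega), T(\omega')\rangle = \trace(T(\omega) T(\omega'))$. Integrating the first over $\Omega$ and interchanging trace with integral gives $M = \trace(S)$. For the potential, the map $(A,B)\mapsto \trace(AB)$ is bilinear, so Fubini lets me pull both integrals inside the trace:
\[
\mathcal{R}(P) = \trace\Bigl(\Bigl(\int_{\Omega} T(\omega)\,d\mu(\omega)\Bigr)\Bigl(\int_{\Omega} T(\omega')\,d\mu(\omega')\Bigr)\Bigr) = \trace(S^2) = \|S\|^2_{HS}.
\]

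With this identification the inequality becomes elementary. Letting $\mu_1,\dots,\mu_n \ge 0$ denote the eigenvalues of $S$, we have $\trace(S) = \sum_j \mu_j$ and $\trace(S^2) = \sum_j \mu_j^2$, and Cauchy-Schwarz gives $\bigl(\sum_j \mu_j\bigr)^2 \le n \sum_j \mu_j^2$, that is $M^2 = (\trace S)^2 \le n\,\trace(S^2) = n\,\mathcal{R}(P)$, which is the claim. Equality in Cauchy-Schwarz forces all the $\mu_j$ to coincide, so $S = \lambda I$ for some $\lambda$; by the definition of tightness given earlier this is exactly the statement that $P$ is a tight random nonorthogonal fusion frame.

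I do not expect a serious obstacle here; the entire content is the opening observation that $\mathcal{R}(P) = \trace(S^2)$. The only point requiring a word of justification is the interchange of trace and integral together with the use of Fubini, but since $\dim \mathcal{H}_n = n < \infty$ the trace is a finite sum of matrix entries, each integrable by the frame bounds, so these interchanges are routine.
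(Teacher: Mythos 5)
Your proposal is correct and follows exactly the route the paper indicates: the paper's (omitted) proof consists precisely of the observation $\mathcal{R}(P)=\trace(S^2)$ followed by the standard simplex-bound argument from the cited work on orthogonal projectors, which is the Cauchy--Schwarz estimate $(\trace S)^2\le n\,\trace(S^2)$ with equality iff $S$ is a multiple of the identity. You have simply written out the details the paper leaves to the reference, and they are all in order.
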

Note that $\mathcal{R}(P) = \trace(S^2)$, where $S$ is the random nonorthogonal fusion frame operator of $P$. This way we see that Proposition \ref{theorem:pot prob general} can be proven by following the lines of the analogues results for orthogonal projectors in \cite{Bachoc:2010aa}.

If $P$ is a projection, then $ \|P\|^2_{HS}\geq\rank(P)$, and equality holds if and only if $P$ is an orthogonal projection.  We therefore have the following:
\begin{corollary}
If $P$ is a nontrivial random projection, then
\begin{equation}\label{eq:fpt}
\mathcal{R}(P)\geq \frac{M^2}{n},\quad \text{where } M=\int_{\Omega} \rank(P(\omega)) d\mu(\omega),
\end{equation}
and equality holds if and only if $P$ is tight and an orthogonal projection almost everywhere.
\end{corollary}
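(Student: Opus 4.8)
The plan is to deduce this corollary directly from Proposition \ref{theorem:pot prob general}, combined with the pointwise comparison $\|P(\omega)\|^2_{HS}\ge\rank(P(\omega))$ recorded just above the statement. To keep the two constants apart, I would write $M_{HS}:=\int_\Omega\|P(\omega)\|^2_{HS}\,d\mu(\omega)$ for the quantity appearing in the proposition and $M:=\int_\Omega\rank(P(\omega))\,d\mu(\omega)$ for the one in the corollary.

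First I would integrate the pointwise inequality $\|P(\omega)\|^2_{HS}\ge\rank(P(\omega))\ge 0$ over $\Omega$ against $\mu$, which yields $M_{HS}\ge M\ge 0$. Since squaring is monotone on the nonnegative reals, this gives $M_{HS}^2\ge M^2$, and hence $\frac{M_{HS}^2}{n}\ge\frac{M^2}{n}$. Chaining this with the bound $\mathcal{R}(P)\ge\frac{M_{HS}^2}{n}$ supplied by Proposition \ref{theorem:pot prob general} produces the desired inequality $\mathcal{R}(P)\ge\frac{M^2}{n}$.

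For the equality characterization I would trace back through the two inequalities just chained. Equality $\mathcal{R}(P)=\frac{M^2}{n}$ forces both $\mathcal{R}(P)=\frac{M_{HS}^2}{n}$ and $M_{HS}^2=M^2$. The first of these is precisely the equality case of Proposition \ref{theorem:pot prob general}, which holds if and only if $P$ is tight. The second, together with $M_{HS}\ge M\ge 0$, upgrades to $M_{HS}=M$, i.e.\ $\int_\Omega\big(\|P(\omega)\|^2_{HS}-\rank(P(\omega))\big)\,d\mu(\omega)=0$; as the integrand is nonnegative it must vanish $\mu$-almost everywhere, and by the fact stated just before the corollary $\|P\|^2_{HS}=\rank(P)$ holds exactly when $P$ is an orthogonal projection, so $P(\omega)$ is an orthogonal projection for $\mu$-almost every $\omega$. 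Conversely, if $P$ is tight and orthogonal almost everywhere, the almost-everywhere orthogonality gives $M_{HS}=M$, while tightness gives $\mathcal{R}(P)=\frac{M_{HS}^2}{n}$ through the proposition, and the two combine to $\mathcal{R}(P)=\frac{M^2}{n}$.

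There is no serious obstacle here: the whole argument is a short monotonicity chain sitting on top of Proposition \ref{theorem:pot prob general}. The only point that will require a little care is the equality analysis, where one must use that both constants are nonnegative so that $M_{HS}^2=M^2$ genuinely yields $M_{HS}=M$, and that a nonnegative integrand with vanishing integral must vanish $\mu$-almost everywhere rather than merely on a single fiber.
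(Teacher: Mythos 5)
Your argument is correct and is exactly the route the paper intends: the corollary is stated immediately after the remark that $\|P\|^2_{HS}\geq\rank(P)$ with equality precisely for orthogonal projections, and the paper leaves the chaining with Proposition \ref{theorem:pot prob general} implicit. Your careful handling of the equality case (nonnegativity of both constants and the vanishing of a nonnegative integrand $\mu$-almost everywhere) fills in the details the paper omits, with no difference in approach.
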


We can expect that the sample of a tight random nonorthogonal fusion frame approximates a tight nonorthogonal fusion frame when the sample size increases. The following theorem generalizes results in \cite{Ehler:2010aa}:
\begin{proposition}\label{th:variances}
Let $\{P_i\}_{i=1}^m$ be a collection of independent tight random nonorthogonal fusion frames with frame bounds $\{A_i\}_{i=1}^m$, respectively, such that,
\begin{equation*}
M:=\frac{1}{m}\sum_{i=1}^m \int_{\Omega} \|P^*_i(\omega)P_i(\omega)\|^2_{HS} d\mu(\omega)<\infty.
\end{equation*}
If $S(\omega)=\sum_{i=1}^m P_i(\omega)^* P_i(\omega)$ denotes the nonorthogonal fusion frame operator associated to $\{P_i(\omega)\}_{i=1}^m$, then
\begin{equation*}
\mathbb{E}(\| \frac{1}{m}S - A I\|^2_{HS}) = \frac{1}{m}(M-n\tilde{A}),
\end{equation*}
where $A=\frac{1}{m}\sum_{i=1}^m A_i$ and $\tilde{A}= \frac{1}{m}\sum_{i=1}^m A^2_i$.
\end{proposition}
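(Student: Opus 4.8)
The plan is to read $\mathbb{E}(\|\frac{1}{m}S - AI\|_{HS}^2)$ as the variance of a sum of independent, centered random operators, exploiting that the Hilbert--Schmidt norm comes from the inner product $\langle T,R\rangle = \trace(T^*R)$. First I would unpack the tightness hypothesis: since each $P_i$ is a tight random nonorthogonal fusion frame with bound $A_i$, its frame operator $\int_{\Omega} P_i(\omega)^*P_i(\omega)\,d\mu(\omega)$ equals $A_i I$; writing $\mathbb{E}$ for the expectation over the joint (product) randomness, this says $\mathbb{E}[P_i^*P_i] = A_i I$. Setting $X_i := P_i^*P_i - A_i I$, each $X_i$ is self-adjoint with $\mathbb{E}[X_i]=0$, and the $X_i$ are mutually independent. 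Since $AI = \frac{1}{m}\sum_{i=1}^m A_i I$, I get the centered decomposition $\frac{1}{m}S - AI = \frac{1}{m}\sum_{i=1}^m X_i$.

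Next I would expand the squared norm through the inner product and take expectations termwise:
\begin{equation*}
\mathbb{E}\left\|\frac{1}{m}\sum_{i=1}^m X_i\right\|_{HS}^2 = \frac{1}{m^2}\sum_{i,j=1}^m \mathbb{E}\langle X_i,X_j\rangle.
\end{equation*}
The key point is that for $i\neq j$ the operators $X_i$ and $X_j$ are independent and centered, so $\mathbb{E}\langle X_i,X_j\rangle = \trace\big(\mathbb{E}[X_i]\,\mathbb{E}[X_j]\big) = 0$, and only the diagonal terms survive, reducing the quantity to $\frac{1}{m^2}\sum_{i=1}^m \mathbb{E}[\|X_i\|_{HS}^2]$.

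For the diagonal terms I would expand
\begin{equation*}
\|P_i^*P_i - A_iI\|_{HS}^2 = \|P_i^*P_i\|_{HS}^2 - 2A_i\trace(P_i^*P_i) + A_i^2 n,
\end{equation*}
using $\langle P_i^*P_i,I\rangle = \trace(P_i^*P_i)$ and $\|I\|_{HS}^2 = n$. Taking expectations and using $\mathbb{E}[\trace(P_i^*P_i)] = \trace(\mathbb{E}[P_i^*P_i]) = \trace(A_iI) = A_i n$, the last two terms collapse to $-A_i^2 n$, so $\mathbb{E}[\|X_i\|_{HS}^2] = M_i - A_i^2 n$, where $M_i := \int_{\Omega} \|P_i^*(\omega)P_i(\omega)\|_{HS}^2\,d\mu(\omega)$. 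Summing over $i$ and substituting $\sum_i M_i = mM$ and $\sum_i A_i^2 = m\tilde{A}$ gives $\frac{1}{m^2}(mM - nm\tilde{A}) = \frac{1}{m}(M - n\tilde{A})$, as claimed.

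The computation is essentially routine once the centered decomposition is in place; the only step that genuinely requires care is the vanishing of the off-diagonal terms, which rests precisely on the independence of the family $\{P_i\}$ together with the centering $\mathbb{E}[X_i]=0$. This is the one place where the independence assumption is actually used, and it is the conceptual heart of the estimate — everything else is expansion of a quadratic and bookkeeping with traces.
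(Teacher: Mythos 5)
Your proof is correct and is essentially the paper's argument: both compute the variance of the sum $\sum_i P_i^*P_i$ by using tightness to identify $\mathbb{E}[P_i^*P_i]=A_iI$ and independence to kill the cross terms $i\neq j$, leaving only the second moments that produce $M$ and $n\tilde A$. The only difference is cosmetic --- you work coordinate-free with the Hilbert--Schmidt inner product and the centered operators $X_i=P_i^*P_i-A_iI$, while the paper expands $\|\tfrac{1}{m}S-AI\|_{HS}^2$ entrywise via the matrix entries $S_{k,l}$ and splits the double sum into diagonal and off-diagonal parts.
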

\begin{proof}
The $(k,l)$-th entry of the matrix $S$ is given by
$
S_{k,l} = \sum_{i=1}^m \langle P_i^*P_ie_k,e_l\rangle,
$
and we observe that $\mathbb{E}(\langle P_i^*P_ie_k,e_l\rangle))=A_i\delta_{k,l}$. We derive
\begin{align*}
\mathbb{E}(\| \frac{1}{m}S - A I_d\|^2_{HS}) & = \mathbb{E}(\sum_{k,l} \big(\frac{1}{m}S_{k,l} - A \delta_{k,l}\big)^2 )\\
& = \mathbb{E}(\sum_{k,l} \frac{1}{m^2} (S_{k,l})^2)   -\mathbb{E}(\sum_{k}\frac{2A}{m}S_{k,k}) + nA^2 \\
& =\mathbb{E}(\sum_{k,l} \frac{1}{m^2} (S_{k,l})^2)   -nA^2
\end{align*}
since $\mathbb{E}(\sum_k \frac{1}{m} S_{k,k})=nA$. We split the occurring double sum of $(S_{k,l})^2$ into its diagonal and nondiagonal parts so that the independence of $\{P_i\}_{i=1}^m$ yields
\begin{align*}
\mathbb{E}(\| \frac{1}{m}S - A I\|^2_{HS})& = \frac{1}{m}M +\sum_{k,l} \frac{1}{m^2}\sum_{i\neq j} \mathbb{E}(\langle P_j^*P_je_k,e_l\rangle) E(\langle P_i^*P_ie_k,e_l\rangle)   -nA^2\\
& = \frac{1}{m}M +\sum_{k,l} \frac{1}{m^2}\sum_{i\neq j} A_j\delta_{k,l} A_i\delta_{k,l}   -nA^2\\
& =  \frac{1}{m}M+  \frac{n}{m^2}\sum_{i\neq j} A_j A_i   -nA^2=  \frac{1}{m}M-\frac{n}{m}\tilde{A}.\qedhere
\end{align*}
\end{proof}

In order to replace the expectation used in Proposition \ref{th:variances} with a proper estimate of the norm of the difference, we can apply large deviation bounds. For instance, the Matrix Rosenthal inequality as stated in \cite{Mackey:2012uq} implies the following family of estimates:
\begin{theorem}
Let $\{P_i\}_{i=1}^m$ be a collection of independent random tight nonorthogonal fusion frames with frame bound $A$, such that, $\mathbb{E}\|P^*_i P_i - AI\|_{4p}^{4p}<\infty$. Let $S$ be as in Proposition \ref{th:variances}. If $p=1$ or $p\geq 3/2$, then
\begin{equation*}
\mathbb{E}\|\frac{1}{m}S -  A I\|_{4p}^{4p}\leq (\frac{4p-1}{m^2})^{2p}\| \big( \sum_{i=1}^m \mathbb{E}(P^*_i P_i - AI)^2 \big)^{1/2} \|^{4p}_{4p} +(\frac{4p-1}{m})^{4p} \sum_{i=1}^m\mathbb{E}\|P^*_i P_i -AI\|_{4p}^{4p}.
\end{equation*}
\end{theorem}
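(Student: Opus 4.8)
The plan is to reduce the statement directly to the Matrix Rosenthal inequality of \cite{Mackey:2012uq} by introducing the centered summands $Y_i := P_i^*P_i - AI$ for $i=1,\ldots,m$. Each $Y_i$ is a self-adjoint random operator on $\mathcal{H}_n$, and since the $\{P_i\}_{i=1}^m$ are assumed independent, so are the $\{Y_i\}_{i=1}^m$. The crucial point is that these summands are \emph{centered}: because each $P_i$ is a tight random nonorthogonal fusion frame with frame bound $A$, its random fusion frame operator equals $\int_\Omega P_i(\omega)^*P_i(\omega)\,d\mu(\omega)=AI$, which is precisely $\mathbb{E}(P_i^*P_i)=AI$; hence $\mathbb{E}(Y_i)=0$. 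The finiteness hypothesis $\mathbb{E}\|P_i^*P_i-AI\|_{4p}^{4p}=\mathbb{E}\|Y_i\|_{4p}^{4p}<\infty$ guarantees that all moments appearing below are well defined.

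Next I would rewrite the quantity to be estimated as a normalized sum. Since $S=\sum_{i=1}^m P_i^*P_i$, we have
\[
\frac{1}{m}S - AI = \frac{1}{m}\sum_{i=1}^m (P_i^*P_i - AI) = \frac{1}{m}\sum_{i=1}^m Y_i,
\]
and therefore
\[
\mathbb{E}\Big\|\frac{1}{m}S - AI\Big\|_{4p}^{4p} = \frac{1}{m^{4p}}\,\mathbb{E}\Big\|\sum_{i=1}^m Y_i\Big\|_{4p}^{4p}.
\]
It thus suffices to bound the $4p$-th Schatten moment of the sum $\sum_i Y_i$ of independent centered self-adjoint operators.

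Now I would invoke the Matrix Rosenthal inequality of \cite{Mackey:2012uq} in its polynomial-moment (Schatten-norm) form, applied with Schatten index $4p$; writing this index as $2q$ with $q=2p$, the inequality reads
\[
\mathbb{E}\Big\|\sum_{i=1}^m Y_i\Big\|_{4p}^{4p} \le (4p-1)^{2p}\,\Big\|\big(\sum_{i=1}^m \mathbb{E}\,Y_i^2\big)^{1/2}\Big\|_{4p}^{4p} + (4p-1)^{4p}\sum_{i=1}^m \mathbb{E}\|Y_i\|_{4p}^{4p}.
\]
The restriction \emph{$p=1$ or $p\ge 3/2$} is exactly what places $q=2p$ in the range for which the cited inequality holds with these constants. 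Substituting $Y_i=P_i^*P_i-AI$ (so that $\mathbb{E}\,Y_i^2=\mathbb{E}(P_i^*P_i-AI)^2$ and $\mathbb{E}\|Y_i\|_{4p}^{4p}=\mathbb{E}\|P_i^*P_i-AI\|_{4p}^{4p}$) and dividing through by $m^{4p}$ then yields the claimed estimate upon collecting powers of $m$: the first constant becomes $m^{-4p}(4p-1)^{2p}=(\tfrac{4p-1}{m^2})^{2p}$ and the second becomes $m^{-4p}(4p-1)^{4p}=(\tfrac{4p-1}{m})^{4p}$.

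The only genuinely delicate point is invoking the cited inequality in the correct form: one must use the moment (trace/Schatten) version that bounds $\mathbb{E}\|\sum_i Y_i\|_{4p}^{4p}$ by a sum of two separate terms, rather than the ``root'' version bounding $\big(\mathbb{E}\|\sum_i Y_i\|_{4p}^{4p}\big)^{1/(4p)}$ by a sum, since raising the latter to the $4p$-th power would produce cross terms (and an extraneous factor such as $2^{4p-1}$) and would not yield the clean additive bound stated here. Once the right form is selected, the remainder is the bookkeeping of the centering identity $\mathbb{E}(P_i^*P_i)=AI$ and the arithmetic of the powers of $m$ indicated above.
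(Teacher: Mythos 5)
Your proposal matches the paper's own (essentially unwritten) proof: the paper offers no argument beyond the remark that the theorem is implied by the Matrix Rosenthal inequality of \cite{Mackey:2012uq}, and your reduction --- centering $Y_i=P_i^*P_i-AI$ via the tightness identity $\mathbb{E}(P_i^*P_i)=AI$, invoking that inequality for independent centered self-adjoint summands with Schatten index $4p$, and rescaling by $m^{-4p}$ --- is precisely the intended route. The one caveat you flag is genuine: the inequality in \cite{Mackey:2012uq} is stated in the ``root'' form bounding $\big(\mathbb{E}\|\sum_i Y_i\|_{4p}^{4p}\big)^{1/(4p)}$ by a sum of two terms, so the clean additive bound as printed would in general require the cross terms (or an extra factor such as $2^{4p-1}$); this is a latent imprecision in the paper's own statement rather than a defect in your reduction.
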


\section*{References}


\begin{thebibliography}{1}

\bibitem{Bachoc:2010aa}
Bachoc, Christine and Ehler, Martin.
\newblock Tight $p$-fusion frames.
\newblock {\it Appl. Comput. Harmon. Anal.},25(1):1-15, 2013

\bibitem{Conway:1996aa}
Conway, John H.  and Hardin, Ronald H.  and Sloane, Neil J. A.
\newblock Packing lines, planes, etc.: packings in {G}rassmannian space.
\newblock {\it Experimental Math.}, 5:139--159, 1996.

\bibitem{FusionChapter}
Casazza, Peter G. and Kutyniok, Gitta.
\newblock Fusion Frames
\newblock In P.~Casazza and G.~Kutyniok, editors, {\it Finite Frames: Theory
  and Applications}, Series in Applied and Numerical Harmonic. Birkhauser,
  2012.


\bibitem{MR2066823}
Casazza, Peter G. and Kutyniok, Gitta.
\newblock Frames of subspaces.
\newblock 2004.

\bibitem{MR2419707}
Casazza, Peter G. and Kutyniok, Gitta and Li, Shidong.
\newblock Fusion frames and distributed processing.
\newblock {\it Appl. Comput. Harmon. Anal.}, 25:114--132, 2008.
		
\bibitem{MR2754774}
Casazza, Peter G. and Fickus, Matthew and Mixon, Dustin G. and Wang, Yang and Zhou, Zhengfang.
\newblock Constructing tight fusion frames.
\newblock {\it Appl. Comput. Harmon. Anal.}, 30:175--187, 2011.

\bibitem{CCL12}
Cahill, Jameson and Casazza, Peter G. and Li, Shidong.
\newblock Non-orthogonal fusion frames and the sparsity of fusion frame operators.
\newblock {\it J. Fourier Anal. Appl.}, 18:287--308, 2012.

\bibitem{CT}
Casazza, Peter G. and Tremain, Janet.
\newblock A brief introduction to Hilbert space frame theory and its applications.

\bibitem{Behrens}
Behrens, Richard T.
\newblock Signal processing applications of oblique projection operators.
\newblock {\it IEEE T. Signal Proces.}, 42:1413--1424, 1994.


\bibitem{Bownik}
Bownik, Marcin, Luoto, Kurt, and Richmond, Edward.
\newblock A combinatorial characterization of tight fusion frames
\newblock {\it Preprint}

\bibitem{Ehler:2010aa}
Ehler, Martin.
\newblock Random tight frames.
\newblock {\it J.~ Fourier Anal.~ Appl.}, 18(1):1-20,  2012.
\newblock arXiv:1102.4080v1.

\bibitem{Okoudjou:2010aa}
Ehler, Martin and Okoudjou, Kasso A.
\newblock Minimization of the probabilistic $p$-frame potential.
\newblock {\it J.~ Stat.~ Plann.~ Inference}, 142(3):645--659, 2012.

\bibitem{Ehler:2011uq}
Ehler, Martin and Okoudjou, Kasso A.
\newblock Probabilistic frames: An overview.
\newblock In P.~Casazza and G.~Kutyniok, editors, {\it Finite Frames: Theory
  and Applications}, Series in Applied and Numerical Harmonic. Birkhauser,
  2012.

\bibitem{Fulton}
Fulton, William.
\newblock Eigenvalues, invariant factors, highest weights, and Schubert calculus.
\newblock {\it Bull. Amer. Math. Soc.} 37:209--249, 2000.

\bibitem{LiYan08}
Li, S. and Yan, D.
\newblock Frame fundamental sensor modeling and stability
  of one-sided frame perturbation.
\newblock {\it Acta Applicandae Mathematicae} 107:91-103, 2009.

\bibitem{LiOgPFFS98}
Li, S. and Ogawa, H.
\newblock Pseudoframes for subspaces with applications.
\newblock {\it J. Fourier Anal. Appl.}, {\rm June}, 10, no. 4:91-103, 2004.

\bibitem{Mackey:2012uq}
Mackey, Lester and Jordan, Michael I. and Chen, Richard Y. and Farrell, Brendan  and Tropp, Joel A.
\newblock Matrix concentration inequalities via the method of exchangable pairs.
\newblock {\it arXiv:1201.6002v1} 2012.

\bibitem{MR2671171}
Massey, Pedro G. and Ruiz, Mariano A. and Stojanoff, Demetrio.
\newblock The structure of minimizers of the frame potential on fusion frames.
\newblock {\it J. Fourier Anal. Appl.}, 16:514--543, 2010.


\bibitem{J.M.Renes:2004aa}
Renes, Joseph M., Blume-Kohout, Robin, Scott, A. J., and Caver, Carlton M.
\newblock Symmetric informationally complete quantum measurements.
\newblock {\it J. Math. Phys.} 45:2171--2180, 2004.

\bibitem{Vale:2005aa}
Vale, Richard and Waldron, Shayne.
\newblock Tight frames and their symmetries.
\newblock {\it Constr.~ Approx.}, 21:83--112, 2005.

\end{thebibliography}
\end{document}